\documentclass[11pt, a4paper]{amsart}
\usepackage[utf8]{inputenc}

\usepackage{dsfont}
\usepackage{polynom}
\usepackage{empheq}
\usepackage{subfiles}
\usepackage{systeme}
\usepackage{shuffle}
\usepackage{ytableau}
\usepackage{CJK}
\usepackage{bm}
\usepackage{faktor}
\usepackage{ifthen}
\usepackage{mathrsfs}
\usepackage{amsmath}
\usepackage{mathtools}
\usepackage{amssymb}
\usepackage{amscd}
\usepackage{appendix}
\usepackage{amsthm}
\usepackage{amsfonts}
\usepackage[all]{xy}
\usepackage{array}
\usepackage{enumerate}
\usepackage{graphicx}
\usepackage{longtable}
\usepackage[OT2,T1]{fontenc}
\usepackage{verbatim}
\usepackage{tikz}
\usetikzlibrary{matrix}
\usepackage{url,setspace,multirow,tikz-cd}
\DeclareSymbolFont{cyrletters}{OT2}{wncyr}{m}{n}
\DeclareMathSymbol{\Sha}{\mathalpha}{cyrletters}{"58}
\usepackage{xcolor}
\usepackage{textcomp}
\usepackage{manyfoot}
\usepackage{booktabs}
\usepackage{algorithm}
\usepackage{algorithmicx}
\usepackage{algpseudocode}
\usepackage{listings}
\usepackage{mathdots}
\usepackage{stmaryrd}
\usetikzlibrary{calc}
\usepackage{hyperref}

\theoremstyle{thmstyleone}
\newtheorem{thm}{Theorem}[section]

\newtheorem{lem}[thm]{Lemma}
\newtheorem{prop}[thm]{Proposition}%
\newtheorem{cor}[thm]{Corollary}

\theoremstyle{definition}
\newtheorem{eg}[thm]{Example}
\newtheorem{rem}[thm]{Remark}

\newtheorem{defn}[thm]{Definition}

\numberwithin{equation}{section}

\newcommand{\ZZ}{{\mathbb Z}}
\newcommand{\QQ}{{\mathbb Q}}

\newcommand{\CC}{{\mathbb C}}

\newcommand{\NN}{{\mathbb N}}

\newcommand{\PP}{{\mathbb P}}

\newcommand{\Lcal}{\mathcal{L}}
\newcommand{\Mcal}{\mathcal{M}}

\newcommand{\Tcal}{\mathcal{T}}

\newcommand{\Ical}{\mathcal{I}}

\newcommand{\Li}{\operatorname{Li}}

\begin{document}

\title[$p$-adic multiple zeta values of integer indices]{$p$-adic multiple zeta values of integer indices}

\author{Ku-Yu Fan}

\address{Graduate School of Mathematics, Nagoya University, Furo-cho, Chikusa-ku, Nagoya, 464-8602, Japan.}
\email{ku-yu.fan.d2@math.nagoya-u.ac.jp }

\date{March 24, 2026.}


\begin{abstract}
This paper concerns the $p$-adic multiple zeta values of integer indices that may contain zero or negative components. We introduce the admissibility and regularizability conditions for integer indices. We define the $p$-adic multiple zeta values associated with admissible integer indices to be finite rational linear combinations of $p$-adic multiple zeta values associated with admissible positive integer indices. We prove that the double shuffle relations, that is, the shuffle and stuffle product formulas, both hold for the values.
\end{abstract}

\keywords{}


\maketitle

\tableofcontents

\section{Introduction}
Multiple zeta values are real numbers defined by the series
\begin{equation}\label{eq MZV}
  \zeta(k_1, \ldots, k_r) = \sum_{0<n_1<\cdots<n_r} \frac{1}{n_1^{k_1} \cdots n_r^{k_r}},
\end{equation}
where $(k_1,\ldots,k_{r})\in \ZZ_{>0}^r$ and $k_r > 1$ to ensure convergence. Multiple zeta values appear in various fields of mathematics such as analytic number theory \cite{Matsumoto}, algebraic geometry \cite{Goncharov}, mathematical physics \cite{Broadhurst}.

The study of multiple zeta values dates back to Euler, who proved the following formula \cite{Euler's formula}, for integers $a, b \geq 2$,
\begin{equation}\label{eq shuffle}
  \zeta(a) \zeta(b)=\sum_{i=0}^{a-1}\binom{i+b-1}{b-1} \zeta(a-i, b+i)+\sum_{i=0}^{b-1}\binom{i+a-1}{a-1} \zeta(b-i, a+i).
\end{equation}
This formula is now known as the shuffle product formula. On the other hand, for integers $a, b \geq 2$, the product of two series shows that
\begin{equation}\label{eq stuffle}
  \zeta(a) \zeta(b)=\zeta(a,b)+\zeta(b,a)+\zeta(a+b).
\end{equation}
This formula is called the stuffle (or quasi-shuffle/harmonic) product formula. The right-hand side of \eqref{eq shuffle} is equal to the right-hand side of \eqref{eq stuffle}, which gives us the double shuffle relation.

Multiple zeta values admit many analogues (e.g., $q$-analogue \cite{q-MZVDSR}, elliptic analogue \cite{EMZVDSR}). The $p$-adic multiple zeta values are one of the examples. The $p$-adic multiple zeta values are $p$-adic numbers defined by a certain limit
$$\zeta_p(k_1,\ldots,k_r) := \lim_{z\to 1}\Li^p_{k_1,\ldots,k_r}(z),$$
where $\Li^{p}_{k_1, \ldots, k_r}(z)$ denotes the $p$-adic multiple polylogarithms which are defined by
\begin{equation}\label{eq p-adic multiple polylogarithm}
  \Li^{p}_{k_1, \ldots, k_r}(z)=\sum_{0<n_1<\cdots<n_r} \frac{z^{n_r}}{n_1^{k_1} \cdots n_r^{k_r}}
\end{equation}
on $\{z\in \CC_p \big| |z|_p<1\}$ and, after fixing a branch of the $p$-adic logarithm, extend uniquely via Coleman’s $p$-adic iterated integrals to $\PP^1(\CC_p)\setminus\{0,1,\infty\}$. Besser and Furusho \cite{p-adicDS} proved the double shuffle relations for $p$-adic multiple zeta values for admissible indices (i.e., $k_r > 1$). Furusho and Jafari \cite{p-adicRDS} introduced a regularization for $p$-adic multiple zeta values with non-admissible indices (i.e.,  $k_r = 1$) and also proved the generalized double shuffle relations for the regularized values.

In this paper, we focus on integer indices $k = (k_1, \ldots, k_r) \in \ZZ^r$ and introduce both admissibility and regularizability conditions for integer indices in \S \ref{section Integer indices}. We show that multiple polylogarithms (see Definition \ref{defn MPL}) with admissible integer indices can be written as a $\QQ$-linear combination of multiple polylogarithms with admissible positive integer indices (see Proposition \ref{prop wt-dep>0}), and we use this result to define the positive-index map $\pi^+$ from the vector space spanned by admissible integer indices to the vector space spanned by admissible positive integer indices (see Definition \ref{defn pi+}) and show that it is compatible with the shuffle product (see Theorem \ref{thm shuffle-pi+}) and the stuffle product (see Theorem \ref{thm stuffle-pi+}). We define $p$-adic multiple zeta values with admissible integer indices to be their limits of the corresponding $p$-adic multiple polylogarithms \eqref{eq p-adic multiple polylogarithm} (see Definition  \ref{defn p-adic MZV integer index}). From Theorems \ref{thm shuffle-pi+} and \ref{thm stuffle-pi+} we deduce that $p$-adic multiple zeta values with admissible integer indices satisfy the double shuffle relations (see Theorem \ref{thm p-adic MZV integer index DSR}).

\section{Integer indices}\label{section Integer indices}
In this section, we introduce our integer index conventions and define the admissibility and regularizability conditions for integer indices.

\begin{defn}
  For $r\in \ZZ_{\geq 1}$, an $r$-tuple $(k_1, \ldots, k_r)$ in $\ZZ^r$ is called an {\it integer index} and denoted by $\bm{k}$. For $r =0$, the $0$-tuple is an {\it integer index} defined as $\emptyset$.
\end{defn}

\begin{defn}
  The set of (resp. positive) integer indices $\Ical$ (resp. $\Ical_{>0}$) is defined by
  $$\Ical = \bigsqcup_{r\in \ZZ_{\geq 0}} \ZZ^r \ (\text{resp. } \Ical_{>0} = \bigsqcup_{r\in \ZZ_{\geq 0}} \ZZ_{>0}^r).$$
\end{defn}

\begin{defn}
  For two integer indices $\bm{k} = (k_1, \ldots, k_r) \in \ZZ^r$ and $\bm{k}' = (k_1', \ldots, k_{r'}') \in \ZZ^{r'}$, we define the {\it concatenation} of $\bm{k}, \bm{k}'$ to be $(k_1, \ldots, k_r, k_1', \ldots, k_{r'}') \in \ZZ^{r+r'}$ and denote it by $(\bm{k}, \bm{k}')$.
\end{defn}

We define weight and depth of integer indices exactly as in the case of positive integers.

\begin{defn}
  For an integer index $\bm{k} = (k_1, \ldots, k_r) \in \ZZ^r$, we define the {\it weight} of $\bm{k}$ to be $k_1 + \cdots + k_r$ and denote it by $\mathrm{wt}(\bm{k})$.
\end{defn}

\begin{defn}
  For an integer index $\bm{k} = (k_1, \ldots, k_r) \in \ZZ^r$, we define the {\it depth} of $\bm{k}$ to be $r$ and denote it by $\mathrm{dep}(\bm{k})$.
\end{defn}

For an integer index $\bm{k} = (k_1, \ldots, k_r) \in \ZZ^r$, we introduce below the tail indices $\bm{k}_t \in \ZZ^{r-t+1}$ for $t = 1, \ldots, r$ and the index $m_{\bm{k}}$. We will see that $m_{\bm{k}}$ plays an important role in extending the admissibility condition to integer indices.

\begin{defn}
  For an integer index $\bm{k} = (k_1, \ldots, k_r) \in \ZZ^r$, we define the {\it tail index} $\bm{k}_t$ of $\bm{k}$ by $\bm{k}_t \coloneqq (k_t, \ldots, k_r) \in \ZZ^{r-t+1}$ for $t = 1, \ldots, r$.
\end{defn}

\begin{defn}\label{defn m}
  For an integer index $\bm{k} = (k_1, \ldots, k_r) \in \ZZ^r$, we define the {\it regularizability index} $m_{\bm{k}}$ of $\bm{k}$ by
  $$m_{\bm{k}} \coloneqq \min\left\{\mathrm{wt}(\bm{k}_t)-\mathrm{dep}(\bm{k}_t) \left| t = 1, \ldots, r\right.\right\} = \min\left\{ \sum_{i = t}^{r} (k_i - 1) \left| t = 1, \ldots, r\right.\right\}.$$
  For $\bm{k} = \emptyset$, we define $m_{\emptyset} \coloneqq \infty$.
\end{defn}

\begin{defn}
  Let $\Sigma$ be an element in the formal $\QQ$-linear space $\mathrm{span}_{\QQ}\{\Ical\}$. We define the support $\mathrm{Supp}(\Sigma)$ of $\Sigma$ to be the set of integer indices satisfying
  $$\Sigma = \sum_{\bm{k} \in \mathrm{Supp}(\Sigma)} c_{\bm{k}}\bm{k}$$
  with $c_{\bm{k}} \in \QQ \setminus \{0\}$.
\end{defn}

\begin{defn}
  Let $\Sigma$ be an element in the formal $\QQ$-linear space $\mathrm{span}_{\QQ}\{\Ical\}$. We define the {\it regularizability index} $m_{\Sigma}$ of $\Sigma$ by
  $$m_{\Sigma} \coloneqq \min\left\{ m_{\bm{k}} \left| \bm{k} \in \mathrm{Supp}(\Sigma) \right. \right\}.$$
\end{defn}

Admissibility is a necessary and sufficient condition for the convergence of multiple zeta values in the complex setting. For a given integer index, we introduce the following definitions of admissibility and regularizability.

\begin{defn}
  An integer index $\bm{k} = (k_1, \ldots, k_r) \in \ZZ^r$ is called {\it admissible} if $m_{\bm{k}}>0$.
\end{defn}

\begin{defn}
  The set of admissible (resp. positive) integer indices $\Ical^{\mathrm{adm}}$ (resp. $\Ical_{>0}^{\mathrm{adm}}$) is defined by
  $$\Ical^{\mathrm{adm}} = \{ \bm{k} \in \Ical | m_{\bm{k}}>0 \} \ (\text{resp. } \Ical_{>0}^{\mathrm{adm}} = \{ \bm{k} \in \Ical_{>0} | m_{\bm{k}}>0 \}).$$
\end{defn}

\begin{defn}
  An integer index $\bm{k} = (k_1, \ldots, k_r) \in \ZZ^r$ is called {\it regularizable} if $m_{\bm{k}} \geq 0$.
\end{defn}

\begin{defn}
  The set of regularizable (resp. positive) integer indices $\Ical^{\mathrm{reg}}$ (resp. $\Ical_{>0}^{\mathrm{reg}}$) is defined by
  $$\Ical^{\mathrm{reg}} = \{ \bm{k} \in \Ical | m_{\bm{k}} \geq 0 \} \ (\text{resp. } \Ical_{>0}^{\mathrm{reg}} = \{ \bm{k} \in \Ical_{>0} | m_{\bm{k}} \geq 0 \}).$$
\end{defn}

When $\bm{k} \in \NN^r$ is a positive integer index, $\bm{k}$ is admissible if $k_r>1$.

\begin{eg}
  \begin{enumerate}
    \item For $\bm{k} = (a) \in \ZZ$, we have $m_{\bm{k}} = \min\{a - 1\}$. So $\bm{k}$ is admissible if $a - 1>0$ and is regularizable if $a - 1 \geq 0$.
    \item For $\bm{k}' = (a, b) \in \ZZ^2$, we have $m_{\bm{k}'} = \min\{a+b-2, b-1\}$. Thus $\bm{k}'$ is admissible if $a+b-2, b - 1>0$ and is regularizable if $a+b-2, b - 1 \geq 0$.
    \item For $\bm{k}'' = (a, b, c) \in \ZZ^3$, we have $m_{\bm{k}''} = \min\{a+b+c-3, b+c-2, c-1\}$. Thus $\bm{k}'$ is admissible if $a+b+c-3, b+c-2, c-1>0$ and is regularizable if $a+b+c-3, b+c-2, c-1 \geq 0$.
  \end{enumerate}
\end{eg}

\section{MPLs of integer indices}\label{section MPLs of integer indices}
In this section, we recall the multiple polylogarithms and prove a theorem asserting that a multiple polylogarithm with an admissible integer index can be expressed as a linear combination of multiple polylogarithms with positive integer indices, and further every positive integer index appearing in the expansion is admissible. We use this result to define the positive-index map on the vector space spanned by indices.

\begin{defn}\label{defn MPL}
  Let $\bm{k} = (k_1, \ldots, k_r) \in \ZZ^r$ be an integer index. The {\it (single variable) multiple polylogarithms} are the power series defined by
  $$\Li_{k_1, \ldots, k_r}(z) \coloneqq \sum_{0<n_1<\cdots<n_r} \frac{z^{n_r}}{n_1^{k_1} \cdots n_r^{k_r}},$$
  where $z$ is a complex number.
\end{defn}

\begin{rem}
  Multiple polylogarithms converge for $\bigl\{z\in \CC \big| |z|<1 \bigr\}$, and the following equation holds.
  \begin{align*}
    & \frac{d}{dz}\Li_{k_1, \ldots, k_r}(z) = \begin{cases}
                                                \frac{1}{z}\Li_{k_1, \ldots, k_r-1}(z), & \mbox{if } k_r \neq 1, \\
                                                \frac{1}{1-z}\Li_{k_1, \ldots, k_{r-1}}(z), & \mbox{if } k_r = 1,
                                              \end{cases} \\
    & \frac{d}{dz}\Li_{1}(z) = \frac{1}{1-z}.
  \end{align*}
\end{rem}

We define the vector spaces generated by multiple polylogarithms.

\begin{defn}
  We consider the following $\QQ$-linear spaces of multiple polylogarithms:
  \begin{enumerate}[(i)]
    \item $\mathcal{MPL}_{>0}^{\mathrm{adm}} = \mathrm{span}_{\QQ}\left\{\Li_{\bm{k}}(z)\left|\bm{k} \in \Ical_{>0}^{\mathrm{adm}} \right.\right\} \subset \QQ\llbracket z \rrbracket.$
    \item $\mathcal{MPL}_{>0} = \mathrm{span}_{\QQ}\left\{\Li_{\bm{k}}(z)\left|\bm{k} \in \Ical_{>0} ( = \Ical_{>0}^{\mathrm{reg}}) \right.\right\} \subset \QQ\llbracket z \rrbracket.$
    \item $\mathcal{MPL}^{\mathrm{adm}} = \mathrm{span}_{\QQ}\left\{\Li_{\bm{k}}(z)\left|\bm{k} \in \Ical^{\mathrm{adm}} \right.\right\} \subset \QQ\llbracket z \rrbracket.$
    \item $\mathcal{MPL}^{\mathrm{reg}} = \mathrm{span}_{\QQ}\left\{\Li_{\bm{k}}(z)\left|\bm{k} \in \Ical^{\mathrm{reg}} \right.\right\} \subset \QQ\llbracket z \rrbracket.$
  \end{enumerate}
\end{defn}

We denote by $B^+_i$ (resp. $B^-_i$) be the Bernoulli numbers given by
$$\frac{t}{1 - e^{-t}} = \sum_{n = 0}^{\infty} B_{n}^{+} \frac{t^n}{n!} \ \left(\text{resp. } \frac{t}{e^t - 1} = \sum_{n = 0}^{\infty} B_{n}^{-} \frac{t^n}{n!}\right).$$
We note
$$B_1^+ = +\frac{1}{2}, \ B_1^- = -\frac{1}{2}, \ \text{and} \ B_n^- =  B_n^+ \ \text{for} \ n > 1.$$

\begin{lem}[Faulhaber’s formula]\label{lem Faulhaber's formula}
  Let $k$ be a non-negative integer. 
  $$\sum_{n=1}^{m} n^k=\frac{1}{k+1} \sum_{i=0}^k\binom{k+1}{i} B^+_i m^{k+1-i} \ \left(\text{resp. } \sum_{n=1}^{m-1} n^k=\frac{1}{k+1} \sum_{i=0}^k\binom{k+1}{i} B^-_i m^{k+1-i} - \delta_{k, 0} \right).$$
\end{lem}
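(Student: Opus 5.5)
The proof compares coefficients of exponential generating functions in $t$. Fix $m\geq 1$ and expand $\sum_{n=1}^{m} e^{nt}$ in two ways. Termwise, the coefficient of $t^k/k!$ is $\sum_{n=1}^m n^k$. On the other hand, summing the geometric series and factoring gives
\[
\sum_{n=1}^{m} e^{nt} = \frac{e^{t}(e^{mt}-1)}{e^{t}-1} = \frac{e^{mt}-1}{t}\cdot\frac{t}{1-e^{-t}} .
\]
We then expand both factors. The second factor is $\sum_{i\geq 0} B_i^+ t^i/i!$ by the definition of $B^+_i$. The first factor is $\sum_{j\geq 1} m^j t^{j-1}/j!$.

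Multiplying the two series, the coefficient of $t^k$ is $\sum_{i+j=k+1,\, j\geq 1} B_i^+ m^j/(i!\,j!)$. Multiplying by $k!$ and writing $j=k+1-i$ with $0\le i\le k$ gives
\[
\sum_{i=0}^{k}\frac{k!}{i!\,(k+1-i)!}B_i^+ m^{k+1-i} = \frac{1}{k+1}\sum_{i=0}^{k}\binom{k+1}{i}B_i^+ m^{k+1-i}.
\]
Equating this with the termwise coefficient gives the first identity.

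For the second identity we use $\sum_{n=0}^{m-1} e^{nt} = \frac{e^{mt}-1}{e^{t}-1} = \frac{e^{mt}-1}{t}\cdot\frac{t}{e^{t}-1}$. The same coefficient extraction, now with $B_i^-$ in place of $B_i^+$, shows that the coefficient of $t^k/k!$ equals $\frac{1}{k+1}\sum_{i=0}^{k}\binom{k+1}{i}B^-_i m^{k+1-i}$. Termwise, the same coefficient is $\sum_{n=0}^{m-1} n^k$. The only term not already in $\sum_{n=1}^{m-1} n^k$ is $n=0$, which contributes $0^k$, interpreted as $1$ when $k=0$ and $0$ otherwise. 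Subtracting $\delta_{k,0}$ therefore yields the stated formula.

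The argument has no real obstacle. The only points needing care are the index shift $j=k+1-i$, together with checking that $j\ge1$ forces $i\le k$, and the $0^0=1$ convention behind the $\delta_{k,0}$ correction.
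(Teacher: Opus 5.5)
The paper gives no proof of this lemma; it quotes it as the classical Faulhaber formula, so there is no argument to compare yours against. Your generating-function proof is correct and complete. Both factorizations, $\frac{e^{mt}-1}{t}\cdot\frac{t}{1-e^{-t}}$ for the first identity and $\frac{e^{mt}-1}{t}\cdot\frac{t}{e^{t}-1}$ for the second, are right. The coefficient extraction over $i+j=k+1$, $j\ge 1$ gives exactly the stated sums, and the $\delta_{k,0}$ comes precisely from the $n=0$ term $e^{0\cdot t}=1$.

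One small remark concerns the range of $m$. In the proof of Proposition \ref{prop wt-dep>0}, the $B^+_i$ identity is also applied with upper limit $n_{m-1}=0$. Your argument covers that case verbatim, since $(e^{0\cdot t}-1)/t=0$ makes both sides vanish. The $B^-_i$ identity, however, genuinely fails at $m=0$, $k=0$, so your restriction $m\ge 1$ is needed there.
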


The preceding lemma is the key to the following proposition.

\begin{prop}\label{prop wt-dep>0}
  The following equalities hold:
  \begin{enumerate}[(i)]
    \item $\mathcal{MPL}_{>0}^{\mathrm{adm}} = \mathcal{MPL}^{\mathrm{adm}}.$
    \item $\mathcal{MPL}_{>0} = \mathcal{MPL}^{\mathrm{reg}}.$
  \end{enumerate}  
\end{prop}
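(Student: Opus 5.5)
The inclusions $\mathcal{MPL}_{>0}^{\mathrm{adm}} \subset \mathcal{MPL}^{\mathrm{adm}}$ and $\mathcal{MPL}_{>0} \subset \mathcal{MPL}^{\mathrm{reg}}$ are trivial, because a positive index is admissible (resp. regularizable) in the integer sense exactly when it is so in the usual sense. For the reverse inclusions we must show that $\Li_{\bm{k}}(z)$, for $\bm{k}$ admissible (resp. regularizable), is a $\QQ$-linear combination of $\Li_{\bm{l}}(z)$ with $\bm{l}$ positive and admissible (resp. arbitrary positive). We do this by eliminating non-positive components one at a time, from left to right, using Lemma \ref{lem Faulhaber's formula}.

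\textbf{Elimination step.} Let $j$ be the smallest index with $k_j \le 0$, and put $a=-k_j\ge 0$.

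First suppose $j<r$. In the sum defining $\Li_{\bm{k}}(z)$ we fix $n_{j-1}<n_{j+1}$ and sum $n_j^{a}$ over $n_{j-1}<n_j<n_{j+1}$. This gives
\[
\sum_{n=1}^{n_{j+1}-1} n^a-\sum_{n=1}^{n_{j-1}} n^a .
\]
By the two versions of Faulhaber's formula, this equals a $\QQ$-combination of $n_{j+1}^{a+1-i}$ (with $B^-$, together with the $-\delta_{a,0}$ term) minus a $\QQ$-combination of $n_{j-1}^{a+1-i}$ (with $B^+$), for $0\le i\le a$. Absorbing these powers into the neighbouring factors, $\Li_{\bm{k}}$ becomes a combination of indices of depth $r-1$:
\begin{itemize}
\item $(k_1,\dots,k_{j-1},k_{j+1}-(a+1-i),\dots,k_r)$,
\item $(k_1,\dots,k_{j-1}-(a+1-i),k_{j+1},\dots,k_r)$, present only if $j>1$,
\item the term coming from $\delta_{a,0}$, which merges with one of the above shapes.
\end{itemize}
When $j=1$ the lower sum is empty.

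Now suppose $j=r$. Then $n_r$ carries $z^{n_r}$ and there is no upper neighbour. In this case we use the same trick in the form
\[
\sum_{n_r>n_{r-1}} n_r^{a} z^{n_r} = \Bigl(z\frac{d}{dz}\Bigr)^{a}\frac{z^{n_{r-1}+1}}{1-z}.
\]
Alternatively, we first derive an expression of $\Li_{\bm{k}_{\text{with }k_r\le0}}$ as a polynomial in $\frac{1}{1-z}$ times lower-depth polylogs. However, admissibility forces $k_r\ge 2$ and regularizability forces $k_r\ge1$, since $m_{\bm{k}}\le k_r-1$. So this case never arises, and we may always take $j<r$.

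Iterating the step strictly lowers the depth, so the process terminates with only positive indices. It remains to control $m$ along the way, and this is the main obstacle.

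\textbf{Key claim.} Each new index $\bm{l}$ produced satisfies $m_{\bm{l}}\ge m_{\bm{k}}$. Consider a tail $\bm{l}_t$. If it starts after position $j$, it is an unchanged tail of $\bm{k}$. Otherwise it corresponds to the tail $\bm{k}_t$ (or $\bm{k}_{t'}$ with $t'$ the matching start) with two changes: the entry $k_j=-a$ is removed, and $a+1-i$ is subtracted from a neighbour. For the quantity $\sum(k_s-1)$ the net change is therefore
\[
-(-a-1)-(a+1-i)=i\ge 0.
\]
The one exception is the tail starting at $j+1$ in the first family, where the modified entry $k_{j+1}-(a+1-i)$ is the first term. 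That tail equals
\[
\sum_{s\ge j+1}(k_s-1)-(a+1-i)=\sum_{s\ge j}(k_s-1)+i,
\]
which is again at least $m_{\bm{k}}$. In the second family, tails starting at $j-1$ or earlier also gain $i$, and later tails are unchanged. The $\delta_{a,0}$ term has the same shape with $i=0$.

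Hence admissibility ($m>0$) and regularizability ($m\ge0$) are preserved at every step, and by induction on the number of non-positive entries both equalities follow. The delicate part is precisely this bookkeeping of tail sums, especially checking the boundary tails at positions $j-1$, $j$ and $j+1$ and the $\delta_{a,0}$ correction. We will verify these carefully.
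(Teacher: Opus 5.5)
Your proposal is correct and is essentially the paper's proof: you sum out the leftmost non-positive entry with the two forms of Faulhaber's formula, obtain the depth-$(r-1)$ families $\bm{k}^{\pm}_{\widehat{m},i}$ and $\bm{k}_{\widehat{m}}$, check that no tail sum $\sum (k_s-1)$ decreases so that $m_{\bm{k}}$ cannot drop, and induct on depth. Your uniform ``net change $i \ge 0$'' bookkeeping and your explicit exclusion of the case $j=r$ are slightly cleaner than the paper's case list, but there are two small slips to fix. First, the $\delta_{a,0}$ term $\bm{k}_{\widehat{j}}$ subtracts nothing from a neighbour, so it corresponds to $i=a+1=1$ (net change $+1$), not to $i=0$. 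Second, the final induction must be on depth (as you correctly stated earlier) rather than on the number of non-positive entries, since that number need not decrease: for example, $\Li_{2,0,1,3}=\Li_{2,0,3}-\Li_{2,1,3}-\Li_{1,1,3}$.
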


\begin{proof}
  First, we have the clear inclusions:
  \begin{align*}
    \mathcal{MPL}_{>0}^{\mathrm{adm}} \subset & \mathcal{MPL}^{\mathrm{adm}}, \\
    \mathcal{MPL}_{>0} \subset & \mathcal{MPL}^{\mathrm{reg}}.
  \end{align*}
  Next, we will below prove the opposite inclusion. Let $\bm{k} = (k_1, \ldots, k_r) \in \ZZ^r$ be an admissible (resp. regularizable) integer index. We prove the multiple polylogarithm $\Li_{\bm{k}}(z) \in \mathcal{MPL}^{\mathrm{adm}}$ (resp. $\mathcal{MPL}^{\mathrm{reg}}$) belongs to $\mathcal{MPL}_{>0}^{\mathrm{adm}}$ (resp. $\mathcal{MPL}_{>0}$) by induction on the depth $r$ of $\bm{k}$. For $r=1$, admissible (resp. regularizable) implies $k_1 \in \NN_{>1}$ (resp. $k_1 \in \NN$); thus $\Li_{k_1}(z)$ lies in $\mathcal{MPL}_{>0}^{\mathrm{adm}}$ (resp. $\mathcal{MPL}_{>0}$).\\
   
  Assume the statement holds for all admissible (resp. regularizable) integer indices $\bm{k}$ with $\mathrm{dep}(\bm{k})<r$. We prove it for all admissible (resp. regularizable) integer indices $\bm{k}$ with $\mathrm{dep}(\bm{k}) = r$. If $\bm{k}$ is a positive integer index, then we are done. We let $m$ be the minimal element of the set $\{i \in \{1, \ldots, r\} \,|\, k_i \leq 0\}$ if $\bm{k}$ is not a positive integer index, and set $n_{m - 1} = 0$ when $m = 1$. We use Lemma \ref{lem Faulhaber's formula} to compute the multiple polylogarithm as follows:
  \begin{align*}
    & \Li_{\bm{k}}(z) = \sum_{0<n_1<\cdots<n_r} \frac{z^{n_r}}{n_1^{k_1} \cdots n_r^{k_r}} = \sum_{0<n_1<\cdots<n_{m}<\cdots<n_r} \frac{z^{n_r}}{n_1^{k_1} \cdots n_{m}^{k_{m}}  \cdots n_r^{k_r}} \\
    = & \sum_{\substack{0<n_1<\cdots<n_{m-1}<n_{m-1}+1 \\<n_{m+1}<\cdots<n_r}} \left( \sum_{n = n_{m-1}+1}^{n_{m+1}-1} n^{-k_{m}} \right) \frac{z^{n_r}}{n_1^{k_1} \cdots n_{m-1}^{k_{m-1}} \cdot n_{m+1}^{k_{m+1}}  \cdots n_r^{k_r}} \\
    = & \sum_{\substack{0<n_1<\cdots<n_{m-1}\\<n_{m+1}<\cdots<n_r}} \left( \sum_{n = 1}^{n_{m+1}-1} n^{-k_{m}} - \sum_{n = 1}^{n_{m-1}} n^{-k_{m}} \right) \frac{z^{n_r}}{n_1^{k_1} \cdots n_{m-1}^{k_{m-1}} \cdot n_{m+1}^{k_{m+1}}  \cdots n_r^{k_r}} \\
    = & \sum_{\substack{0<n_1<\cdots<n_{m-1}\\<n_{m+1}<\cdots<n_r}} \frac{1}{-k_{m}+1} \sum_{i=0}^{-k_{m}}\binom{-k_{m}+1}{i} B^-_i n_{m+1}^{-k_{m}+1-i} \frac{z^{n_r}}{n_1^{k_1} \cdots n_{m-1}^{k_{m-1}} \cdot n_{m+1}^{k_{m+1}}  \cdots n_r^{k_r}} \\
    & - \delta_{k_m, 0} \frac{z^{n_r}}{n_1^{k_1} \cdots n_{m-1}^{k_{m-1}} \cdot n_{m+1}^{k_{m+1}}  \cdots n_r^{k_r}} \\
    & - \sum_{\substack{0<n_1<\cdots<n_{m-1}\\<n_{m+1}<\cdots<n_r}} \frac{1}{-k_{m}+1} \sum_{i=0}^{-k_{m}}\binom{-k_{m}+1}{i} B^+_i n_{m-1}^{-k_{m}+1-i} \frac{z^{n_r}}{n_1^{k_1} \cdots n_{m-1}^{k_{m-1}} \cdot n_{m+1}^{k_{m+1}}  \cdots n_r^{k_r}} \\
   = & \frac{1}{-k_{m}+1} \sum_{i=0}^{-k_{m}}\binom{-k_{m}+1}{i} B^-_i \sum_{\substack{0<n_1<\cdots<n_{m-1}\\<n_{m+1}<\cdots<n_r}} \frac{z^{n_r}}{n_1^{k_1} \cdots n_{m-1}^{k_{m-1}} \cdot n_{m+1}^{k_{m+1}+k_{m}-1+i}  \cdots n_r^{k_r}} \\
    & - \delta_{k_m, 0} \sum_{\substack{0<n_1<\cdots<n_{m-1}\\<n_{m+1}<\cdots<n_r}} \frac{z^{n_r}}{n_1^{k_1} \cdots n_{m-1}^{k_{m-1}} \cdot n_{m+1}^{k_{m+1}}  \cdots n_r^{k_r}} \\
    & - \frac{1}{-k_{m}+1} \sum_{i=0}^{-k_{m}}\binom{-k_{m}+1}{i} B^+_i \sum_{\substack{0<n_1<\cdots<n_{m-1}\\<n_{m+1}<\cdots<n_r}} \frac{z^{n_r}}{n_1^{k_1} \cdots n_{m-1}^{k_{m-1}+k_{m}-1+i} \cdot n_{m+1}^{k_{m+1}}  \cdots n_r^{k_r}} \\
    = & \frac{1}{-k_{m}+1} \sum_{i=0}^{-k_{m}}\binom{-k_{m}+1}{i} B^-_i \Li_{k_1, \ldots, k_{m-1}, k_{m+1}+k_{m}-1+i, k_{m+2}, \ldots, k_r}(z) \\
    & - \delta_{k_m, 0} \Li_{k_1, \ldots, k_{m-1}, k_{m+1}, \ldots, k_r}(z) \\
    & - \frac{1}{-k_{m}+1} \sum_{i=0}^{-k_{m}}\binom{-k_{m}+1}{i} B^+_i \Li_{k_1, \ldots, k_{m-2}, k_{m-1}+k_{m}-1+i, k_{m+1}, \ldots, k_r}(z)
  \end{align*}
  We use the following notation
  \begin{align*}
    \bm{k}_{\widehat{m},i}^+ = & (k_1, \ldots, k_{m-1}, k_{m+1}+k_{m}-1+i, k_{m+2}, \ldots, k_r) \\
    \bm{k}_{\widehat{m},i}^- = & (k_1, \ldots, k_{m-2}, k_{m-1}+k_{m}-1+i, k_{m+1}, \ldots, k_r) \\
    \bm{k}_{\widehat{m}} = & (k_1, \ldots, k_{m-1}, k_{m+1}, \ldots, k_r).
  \end{align*}
  Then, it is easy to check
  \begin{align*}
    & \mathrm{wt}((\bm{k}_{\widehat{m},i}^+)_{t})-\mathrm{dep}((\bm{k}_{\widehat{m},i}^+)_{t}) = \mathrm{wt}(\bm{k}_{t+1})-\mathrm{dep}(\bm{k}_{t+1}), & \text{ if } & t = m+1, \ldots, r-1, \\
    & \mathrm{wt}((\bm{k}_{\widehat{m},i}^+)_{t})-\mathrm{dep}((\bm{k}_{\widehat{m},i}^+)_{t}) = \mathrm{wt}(\bm{k}_{t})-\mathrm{dep}(\bm{k}_{t}) + i, & \text{ if } & t = m, \\
    & \mathrm{wt}((\bm{k}_{\widehat{m},i}^+)_{t})-\mathrm{dep}((\bm{k}_{\widehat{m},i}^+)_{t}) \geq \mathrm{wt}((\bm{k}_{\widehat{m},i}^+)_{t+1})-\mathrm{dep}((\bm{k}_{\widehat{m},i}^+)_{t+1}), & \text{ if } & t = 1, \ldots, m-1, \\
    & \mathrm{wt}((\bm{k}_{\widehat{m},i}^-)_{t})-\mathrm{dep}((\bm{k}_{\widehat{m},i}^-)_{t}) = \mathrm{wt}(\bm{k}_{t+1})-\mathrm{dep}(\bm{k}_{t+1}), & \text{ if } & t = m, \ldots, r-1, \\
    & \mathrm{wt}((\bm{k}_{\widehat{m},i}^-)_{t})-\mathrm{dep}((\bm{k}_{\widehat{m},i}^-)_{t}) = \mathrm{wt}(\bm{k}_{t+1})-\mathrm{dep}(\bm{k}_{t+1}) + k_{m-1} -1 +i, & \text{ if } & t = m-1, \\
    & \mathrm{wt}((\bm{k}_{\widehat{m},i}^-)_{t})-\mathrm{dep}((\bm{k}_{\widehat{m},i}^-)_{t}) \geq \mathrm{wt}((\bm{k}_{\widehat{m},i}^-)_{t+1})-\mathrm{dep}((\bm{k}_{\widehat{m},i}^-)_{t+1}), & \text{ if } & t = 1, \ldots, m-2\\
    & \mathrm{wt}((\bm{k}_{\widehat{m}})_{t})-\mathrm{dep}((\bm{k}_{\widehat{m}})_{t}) = \mathrm{wt}(\bm{k}_{t+1})-\mathrm{dep}(\bm{k}_{t+1}), & \text{ if } & t = m, \ldots, r-1, \\
    & \mathrm{wt}((\bm{k}_{\widehat{m}})_{t})-\mathrm{dep}((\bm{k}_{\widehat{m}})_{t}) \geq \mathrm{wt}(\bm{k}_{t})-\mathrm{dep}(\bm{k}_{t}), & \text{ if } & t = 1, \ldots, m-1, \\
  \end{align*}
  Hence, we obtain that $\bm{k}_{\widehat{m},i}^+$, $\bm{k}_{\widehat{m},i}^-$, and $\bm{k}_{\widehat{m}}$ are admissible (resp. regularizable) because we assume that $\bm{k}$ is admissible (resp. regularizable); hence $\Li_{\bm{k}_{\widehat{m},i}^+}(z), \Li_{\bm{k}_{\widehat{m},i}^-}(z) \in \mathcal{MPL}_{>0}^{\mathrm{adm}}$ (resp. $\mathcal{MPL}_{>0}$). This implies $\Li_{\bm{k}}(z) \in \mathcal{MPL}_{>0}^{\mathrm{adm}}$ (resp. $\mathcal{MPL}_{>0}$) by our computation, which completes the proof of this proposition.
\end{proof}

\begin{defn}\label{defn pi+}
  The {\it positive-index map} $\pi^{+}$ is the $\QQ$-linear map
  $$\pi^{+} : \mathrm{span}_{\QQ}\{\Ical^{\mathrm{adm}}\} \to \mathrm{span}_{\QQ}\{\Ical_{>0}^{\mathrm{adm}}\} \ (\text{resp. } \mathrm{span}_{\QQ}\{\Ical^{\mathrm{reg}}\} \to \mathrm{span}_{\QQ}\{\Ical_{>0}^{\mathrm{reg}}\})$$
  defined by
  $$\pi^{+}(\bm{k}) = \sum_{\bm{l}}c_{\bm{k},\bm{l}}\bm{l},$$
  where the positive integer indices $\bm{l}$ and the coefficients $c_{\bm{k},\bm{l}}\in \QQ \setminus \{0\}$ are uniquely obtained step by step through the inductive argument in the proof of Proposition \ref{prop wt-dep>0}.
\end{defn}

\begin{rem}\label{rem pi+}
  Actually, the positive-index map can be extended to $\mathrm{span}_{\QQ}\{\Ical\}$, that is,
  $$\pi^{+} : \mathrm{span}_{\QQ}\{\Ical\} \to \mathrm{span}_{\QQ}\{\Ical\} , \qquad \pi^{+}(\bm{k}) = \sum_{\bm{l}}c_{\bm{k},\bm{l}}\bm{l},$$
  where the integer indices
  $$\bm{l} \in \{\emptyset\} \sqcup \bigsqcup_{r\in \ZZ_{> 0}} \ZZ_{>0}^{r-1}\times \ZZ_{>m_{\bm{k}}}$$
  and the coefficients $c_{\bm{k},\bm{l}}\in \QQ \setminus \{0\}$ are obtained step by step through the same inductive argument as in the proof of Proposition \ref{prop wt-dep>0}. For integer indices $\bm{k} \in \Ical$, we have the identity
  \begin{equation}\label{eq pi+}
    \Li_{\bm{k}}(z) = \sum_{\bm{l} \in \mathrm{Supp}(\pi^{+}(\bm{k}))}c_{\bm{k},\bm{l}}\Li_{\bm{l}}(z) \in \QQ\llbracket z \rrbracket.
  \end{equation}
\end{rem}

\begin{eg}
  \begin{enumerate}
    \item For $\bm{k} = (a) \in \ZZ$, we have $\pi^{+}(\bm{k}) = (a)$.
    \item For $\bm{k} = (a, b) \in \ZZ^2$, we have $\pi^{+}(\bm{k}) = (a, b)$ if $a>0$ and
    $$\pi^{+}(\bm{k}) = \frac{1}{-a+1} \sum_{i=0}^{-a}\binom{-a+1}{i} B^-_i (a+b-1+i) - \delta_{-a, 0}(b)$$
    if $a \leq 0$.
  \end{enumerate}
\end{eg}

\section{Shuffle product for integer indices}\label{section Shuffle product for integer indices}
In this section, we recall the shuffle product of multiple polylogarithms with integer indices and prove that the shuffle product of two admissible (resp. regularizable) integer indices can be expanded as a linear combination of integer indices that remain admissible (resp. regularizable). We show that the positive-index map is an algebra homomorphism with respect to shuffle product (Theorem \ref{thm shuffle-pi+}).

We start by recalling the shuffle product for integer indices introduced by Ebrahimi-Fard, Manchon, and Singer \cite{non-positive shuffle}.
\begin{defn}[{\cite{non-positive shuffle}}]
  Let $X$ be the alphabet $\{j, d, y\}$, and let $W$ denote the set of words on the alphabet $X$, subject to the rule $jd = dj = \bm{1}$, where $\bm{1}$ denotes the empty word. We define $A$ as the $\QQ$-linear space $\mathrm{span}_{\QQ} \{ W \}$ spanned by the words in $W$.
\end{defn}

Every $w\in W$ can be written uniquely in the canonical form
$$w = j^{k_1}yj^{k_2}y\cdots j^{k_{r-1}}yj^{k_r}$$
with $k_i \in \ZZ$; here we adopt the conventions $j^{-1} = d$ and $j^0 = \bm{1}$.

\begin{defn}[{\cite{non-positive shuffle}}]
  Let $w = j^{k_1}yj^{k_2}y\cdots j^{k_{r-1}}yj^{k_r}$ be a word in $W$. We define the {\it length} of $w$ by $|w| = |k_1| + \cdots + |k_r| + (r - 1)$.
\end{defn}

\begin{defn}[{\cite{non-positive shuffle}}]\label{defn non-positive shuffle}
  We define the shuffle product $\shuffle : A \otimes A \to A$ by $\bm{1} \shuffle w = w \shuffle \bm{1} = w$ for any word $w \in W$, and recursively with respect to the sum of the lengths of two words in $W$:
  \begin{enumerate}[(i)]
    \item $yu \shuffle v \coloneqq u \shuffle yv \coloneqq y(u \shuffle v),$
    \item $ju \shuffle jv \coloneqq  j(u \shuffle jv) + j(ju \shuffle v),$
    \item $du \shuffle dv \coloneqq d(u \shuffle dv) - u \shuffle d^2v,$
    \item $du \shuffle jv \coloneqq d(u \shuffle jv) - u \shuffle v,$
    \item $ju \shuffle dv \coloneqq d(ju \shuffle v) - u \shuffle v.$
  \end{enumerate}
\end{defn}

\begin{lem}[{\cite[Lemma~3.4]{non-positive shuffle}}]
  The $\QQ$-vector space
  $$\Tcal \coloneqq \mathrm{span}_{\QQ} \left\{ j^{k_1}yj^{k_2}y\cdots j^{k_{r-1}}yj^{k_r} \in W \left| k_1, \ldots, k_{r} \in \ZZ, k_r \neq 0, r \in \NN \right. \right\}$$
  is a two-sided ideal of $(A, \shuffle)$.
\end{lem}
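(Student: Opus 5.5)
We must show: for words $u,v\in W$ with $v\in\Tcal$ (resp. $u\in\Tcal$), the products $u\shuffle v$ and $v\shuffle u$ again lie in $\Tcal$. Since $\shuffle$ is bilinear, it suffices to treat words. Write a word in canonical form $w=j^{k_1}y\cdots yj^{k_r}$. Then $w\in\Tcal$ exactly when its last block exponent $k_r$ is nonzero, i.e.\ when $w$ does not end with $y$ and is not $\bm 1$. So the claim is that if one factor ends in a nonzero power of $j$ (a letter $j$ or $d$ at the end), then every word occurring in the product does too. I will prove this by induction on $|u|+|v|$, following the recursive definition of $\shuffle$ in Definition~\ref{defn non-positive shuffle}.

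The base cases are immediate: $\bm 1\shuffle w=w$ and $w\shuffle\bm 1=w$, so the product lies in $\Tcal$ when $w\in\Tcal$. For the inductive step I go through the five clauses of the definition, assuming that one of $u,v$ lies in $\Tcal$; write $u=xu'$ and $v=x'v'$ with $x,x'$ letters.

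\emph{Clause (i).} We have $yu'\shuffle v=y(u'\shuffle v)$. If $v\in\Tcal$, then $u'\shuffle v\in\Tcal$ by induction. If instead $yu'\in\Tcal$, then $u'$ is nonempty and ends in $j^{\pm}$, so $u'\in\Tcal$; hence $u'\shuffle v\in\Tcal$. Prepending $y$ does not change the last block, so $\Tcal$ is stable under left multiplication by $y$. The same holds for the symmetric version $u\shuffle yv'$.

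\emph{Clauses (ii)--(v).} Each term is either a letter $j$ or $d$ times a smaller shuffle, or a smaller shuffle with no prefix. The smaller shuffles have the forms $u'\shuffle jv'$, $ju'\shuffle v'$, $u'\shuffle d^2v'$, $u'\shuffle v'$ and $ju'\shuffle v'$, and in each the total length drops. I need that membership in $\Tcal$ persists: stripping a leading $j$ or $d$ from a word in $\Tcal$, or adding one, keeps the last block nonzero, provided the word does not collapse.

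There are two delicate points. First, $\Tcal$ must be closed under left multiplication by $j$ and $d$. This holds because left multiplication only changes the first block exponent $k_1$, unless $r=1$, in which case the word is $j^{k_1}$ and becomes $j^{k_1\pm1}$, which might be $\bm 1\notin\Tcal$. Second, stripping a first letter can produce $\bm 1$. These degenerate cases arise only when a factor is a pure power $j^k$, for example $u=d$, $v=j$ in clause (iv), or $u=j$, $v=d$ in clause (v).

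I therefore treat the pure-power case separately and expect it to be the main obstacle. I would show directly, by a secondary induction, that $j^a\shuffle j^b$ for $a,b\in\ZZ$ yields only pure powers $j^c$, words in $\Tcal$ with $c\neq0$, up to possible cancellations. If constant terms ($\bm 1$) do appear, I will check that they cancel; for instance, $d\shuffle j=d(\bm 1\shuffle j)-\bm 1\shuffle\bm 1=\bm 1-\bm 1=0$.

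The cleanest route is probably to use the known model from \cite{non-positive shuffle}. There $j$ acts as integration $\int_0^x$ and $d$ as differentiation, and $\shuffle$ corresponds to the pointwise product of functions. Elements of $\Tcal$ correspond to functions vanishing at $0$ (those ending in an integration), or more generally to the appropriate ideal in that model. Since that correspondence is only partially available here, I will instead verify the combinatorial invariant directly. The invariant is "the coefficient of every word ending in $y$, and of $\bm 1$, is zero". It is carried through clauses (ii)--(v) by the induction hypothesis, and through the degenerate pure-power cases by the explicit cancellation computed above.
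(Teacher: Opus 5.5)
Your proposal has a genuine gap in the degenerate case, and it cannot be closed, because the statement as transcribed here is false.

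\textbf{Where the argument fails.} You reduce the degenerate case to products $j^a\shuffle j^b$ of two pure powers. The degeneracy actually arises when the factor lying in $\Tcal$ is the single letter $j$ and the \emph{other} factor is an arbitrary word. Stripping the leading $j$ leaves $\bm 1\notin\Tcal$. Clause (ii) then contributes $j(\bm 1\shuffle jv')=j\,jv'$, and clause (v) contributes $-\bm 1\shuffle v'=-v'$. Your induction hypothesis says nothing about either term.

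Take $v'=y$. Rules (ii), (v) and (i) give
\[
j\shuffle jy=j(\bm 1\shuffle jy)+j(j\shuffle y)=j^2y+jyj,\qquad j\shuffle dy=d(j\shuffle y)-\bm 1\shuffle y=dyj-y .
\]
Here $j\in\Tcal$ (take $r=1$, $k_1=1$). But $j^2y$ and $y$ have last exponent $0$, occur with coefficient $\pm1$, and nothing cancels them. The same happens on the right, since $jy\shuffle j=jyj+j^2y$. It also happens with $yj\in\Tcal$ in place of $j$, since $yj\shuffle jy=yj^2y+yjyj$.

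So with rules (i)--(v) and $\Tcal$ exactly as written in this paper, $\Tcal$ is neither a left nor a right ideal. No secondary induction or cancellation check can rescue your invariant. Your sample checks worked only because the $d$-degenerate cases happen to be harmless: $d\shuffle jv=djv-v=0$ and $d\shuffle dv=d^2v-d^2v=0$.

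\textbf{What the function model actually says.} Read correctly, the model points the same way. Let $y$ act as multiplication by $z/(1-z)$, $d$ as $z\partial_z$, and $j$ as $f\mapsto\int_0^z f(t)\,\frac{dt}{t}$. Then the words ending in $y$ are the ones giving functions that vanish at $0$. A word ending in $d$ evaluates to $0$, since $z\partial_z(1)=0$. A word ending in $j$, however, involves the divergent integral $\int_0^z dt/t$, so there is no reason for it to be absorbing.

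\textbf{A smaller flaw.} In clause (iii), the term $u'\shuffle d^2v'$ has the same total length as $du'\shuffle dv'$. So your claim that ``in each the total length drops'' is false, and induction on $|u|+|v|$ alone is not well founded. You would need a secondary induction on the length of the first factor.

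The paper gives no argument of its own here, only the citation of \cite[Lemma~3.4]{non-positive shuffle}. Before attempting a proof, compare the definition of $\Tcal$ and of the ambient algebra with the original source.
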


\begin{proof}
  See {\cite[Lemma~3.4]{non-positive shuffle}}.
\end{proof}

\begin{defn}
  Let $\Lcal$ be the smallest two-sided ideal of $A' \coloneqq A/\Tcal$ stable under left concatenation with $j$ or $d$ and containing the expressions
  $$d(u \shuffle v) - du \shuffle v - u \shuffle dv, \null u, v \in Wy.$$
  We define the quotient algebra $B$ as $A'/\Lcal \cong A/(\Tcal + \Lcal)$.
\end{defn}

\begin{prop}[{\cite[Proposition~3.5]{non-positive shuffle}}]\label{prop non-positive shuffle}
  The pair $(B, \shuffle)$ is a commutative, associative, and unital algebra.
\end{prop}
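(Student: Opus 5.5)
The approach is to show that $(A,\shuffle)$ is unital and that commutativity and associativity hold modulo $\Tcal+\Lcal$. Since $\Tcal+\Lcal$ is a two-sided ideal (by Lemma 3.4 of \cite{non-positive shuffle} and the definition of $\Lcal$), the product descends to $B$, and the class of $\bm{1}$ is the unit because $\bm{1}\shuffle w = w\shuffle \bm{1}=w$. It then remains to check
$$u\shuffle v - v\shuffle u \in \Tcal+\Lcal, \qquad (u\shuffle v)\shuffle w - u\shuffle(v\shuffle w)\in \Tcal+\Lcal$$
for words $u,v,w\in W$. Both will be proved by induction on the total length $|u|+|v|$ (resp. $|u|+|v|+|w|$), following the recursive definition of $\shuffle$.

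\textbf{Reduction.} Modulo $\Tcal$, a word represents a nonzero class only when its last exponent is $k_r=0$. So we may take $u,v\in Wy\cup\{\bm{1}\}$, i.e.\ canonical forms ending in $y$. The cases involving $\bm{1}$ are trivial.

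\textbf{Commutativity.} Write $u=au'$ and $v=bv'$ with first letters $a,b\in\{j,d,y\}$, and split into cases.
\begin{enumerate}[(i)]
\item If $a=y$ or $b=y$, rule (i) applies symmetrically.
\item For $(j,j)$, rule (ii) is visibly symmetric.
\item For $(d,j)$ versus $(j,d)$, rules (iv) and (v) give $d(u'\shuffle jv')-u'\shuffle v'$ and $d(ju\shuffle v')-u\shuffle v'$ respectively. The induction hypothesis equates these modulo the ideal, using that $\Tcal+\Lcal$ is stable under left concatenation by $d$.
\item For $(d,d)$, rule (iii) is not symmetric, and this is where $\Lcal$ is needed. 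Writing $du'=d\tilde u$, the relation $d(\tilde u\shuffle \tilde v)\equiv d\tilde u\shuffle\tilde v+\tilde u\shuffle d\tilde v$ in $\Lcal$ lets one rewrite $du'\shuffle dv'$ as $d(u'\shuffle dv')-u'\shuffle d^2v'$. Applying the relation once more to $u'\shuffle d^2v'$ produces an expression symmetric in $u',v'$ up to lower-length terms, which are handled by induction.
\end{enumerate}

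\textbf{Associativity.} I would first establish two derivation-type identities modulo the ideal:
$$j(x\shuffle y)\equiv jx\shuffle y + x\shuffle jy \quad\text{(up to the appropriate Rota--Baxter form)}, \qquad d(x\shuffle y)\equiv dx\shuffle y+x\shuffle dy.$$
Here $j$ acts as an integration (Rota--Baxter) operator of weight zero and $d$ as its inverse derivation. The $d$-identity holds by the defining generators of $\Lcal$. The $j$-identity comes from rule (ii) and the relation $jd=dj=\bm{1}$. With these in hand, associativity follows the standard argument for shuffle algebras. One expands both sides by the first letter of the words and uses the Rota--Baxter identity $J(x)\shuffle J(y)=J(x\shuffle J(y)+J(x)\shuffle y)$ together with the Leibniz rule for $d$. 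Mixed cases are converted using $d\circ J=\mathrm{id}$, and the remaining terms are closed up by induction on total length.

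\textbf{Main obstacle.} I expect the hardest part to be the mixed cases containing $d$: showing that rules (iii)--(v) are compatible with associativity requires carefully tracking which discrepancies land in $\Lcal$, and in particular controlling words not ending in $y$, which must be pushed into $\Tcal$. If this bookkeeping becomes too heavy, I would instead give a faithful representation. One maps $B$ to functions (e.g.\ $y\mapsto$ multiplication by $\tfrac{1}{1-z}$-type kernels, $j\mapsto$ integration $\int_0^z$, $d\mapsto z\frac{d}{dz}$) and checks that the kernel contains $\Tcal+\Lcal$ and that the image product is pointwise multiplication. Commutativity and associativity would then be inherited, provided injectivity on $B$ can be shown. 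This alternative is the content of \cite[Proposition~3.5]{non-positive shuffle}, to which one may ultimately defer.
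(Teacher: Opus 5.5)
The paper gives no argument for this proposition beyond citing \cite[Proposition~3.5]{non-positive shuffle}, so your closing deferral agrees with it. The self-contained argument you sketch, however, has a concrete gap in the $(d,d)$ case of commutativity. The rewriting you attribute to $\Lcal$, namely $du'\shuffle dv'=d(u'\shuffle dv')-u'\shuffle d^{2}v'$, is rule (iii) verbatim: the $\Lcal$-generator for the pair $(u',dv')$ vanishes identically in $A$, so it carries no information. Moreover $|u'|+|d^{2}v'|=|du'|+|dv'|$, so the residual term is not of lower length. Applying (iii) again only moves another $d$ to the right, and nothing symmetric in $u',v'$ ever appears. In fact every generator whose second argument does not begin with $y$ vanishes identically by (ii)--(v), so the content of $\Lcal$ lies only in generators with second argument beginning with $y$. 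For instance, $d^{2}y\shuffle dy-dy\shuffle d^{2}y=d^{2}ydy-3\,dyd^{2}y+2\,yd^{3}y$ is nonzero in $A'$, and it is exactly minus the generator for $(u,v)=(d^{2}y,y)$. A working argument applies the Leibniz relation to the reversed product, $dv'\shuffle du'\equiv d(dv'\shuffle u')-d^{2}v'\shuffle u'$. It matches $d(u'\shuffle dv')$ with $d(dv'\shuffle u')$ by induction on length. It then compares $u'\shuffle d^{2}v'$ with $d^{2}v'\shuffle u'$ by a secondary induction on the number of leading $d$'s of the first factor, which terminates at rule (i).

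Associativity is not actually argued. Your displayed $j$-identity is false as written: $j$ is not a derivation, and rule (ii) is the weight-zero Rota--Baxter identity. The standard shuffle induction also does not cover rules (iii)--(v). The missing idea is a propagation argument. Let $B^{+}$ be the image of $\mathrm{span}_{\QQ}\{Wy\}$ in $B$. Left concatenation by $d$ is a bijection of $B^{+}$ with inverse $j$. This fails on $\bm{1}$, since $j\bm{1}=j\in\Tcal$. Note also that $\Tcal$ itself is not stable under left concatenation by $d$, since $dj=\bm{1}$, so you must work with representatives in $\{\bm{1}\}\cup Wy$. By $\Lcal$, $d$ is a derivation of $B^{+}$, so the associator satisfies $d\,\mathrm{As}(a,b,c)=\mathrm{As}(da,b,c)+\mathrm{As}(a,db,c)+\mathrm{As}(a,b,dc)$. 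Both forms of rule (i) give $\mathrm{As}(a,b,c)=y\,\mathrm{As}(\cdots)$ whenever one argument begins with $y$, so $\mathrm{As}$ vanishes there by induction on total depth. Now write the arguments as $j^{k_{i}}y(\cdots)$. The identity propagates vanishing from the hyperplanes $k_{i}=0$ to all triples with some $k_{i}\le 0$. Since $d$ is injective on $B^{+}$, induction on $k_{1}+k_{2}+k_{3}$ then gives vanishing for all $k_{i}>0$. The same argument applied to $a\shuffle b-b\shuffle a$ gives commutativity without any case analysis.

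Finally, your fallback cannot work. The realization $y\mapsto$ multiplication by $\frac{z}{1-z}$, $j\mapsto\int_{0}^{z}\frac{dt}{t}$, $d\mapsto z\frac{d}{dz}$ is not injective on $B$. Both $yy$ and $dy-y$ map to $z^{2}/(1-z)^{2}$; this is the identity $\Li_{0,0}=\Li_{-1}-\Li_{0}$. Yet $yy-dy+y\notin\Tcal+\Lcal$. Every rule of $\shuffle$ preserves the number of $y$'s and the weight, so $\Lcal$ is spanned by bihomogeneous elements of depth $\ge 2$ and cannot produce the depth-one component $y-dy$. Commutativity and associativity therefore cannot be inherited from functions this way.
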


\begin{proof}
  See {\cite[Proposition~3.5]{non-positive shuffle}}.
\end{proof}

After passing to the quotient $A'$ (hence to $B$), every quotient class admits a unique representative in $Wy$. More precisely, for any quotient class $\overline{w}\in B$, there exists a word $w' \in \overline{w}$ that can be written uniquely in the canonical form
$$w' = j^{k_r}y\cdots j^{k_2}yj^{k_1}y$$
which corresponds to the integer index $(k_1, \ldots, k_r)$ in $\ZZ^r$. This gives a bijection $\phi : \mathrm{span}_{\QQ}\{\Ical\} \to B$ between the $\QQ$-linear space $\mathrm{span}_{\QQ}\{\Ical\}$ and $B$. Consequently, the shuffle product of integer indices follows from $(B, \shuffle)$, that is, $(\mathrm{span}_{\QQ}\{\Ical\}, \shuffle)$ is a $\QQ$-algebra with the shuffle product defined by
$$\phi^{-1} \circ \shuffle \circ (\phi, \phi) : \mathrm{span}_{\QQ}\{\Ical\} \times \mathrm{span}_{\QQ}\{\Ical\} \to \mathrm{span}_{\QQ}\{\Ical\}.$$

\begin{rem}\label{rem non-positive shuffle}
  It follows from Proposition \ref{prop non-positive shuffle} that the validity of three defining relations (iii)–(v) are reduced to the validity of
  $$du \shuffle v = d(u \shuffle v) - u \shuffle dv$$
  for all words $u, v \in B$.
\end{rem}

\begin{defn}
  Let $w = j^{k_r}y\cdots j^{k_2}yj^{k_1}y$ be a word in $B$. We define $\bm{k}_{w} = (k_1, \ldots, k_r)$ to be the {\it corresponding integer index} in $\ZZ^r$.
  Conversely for $\bm{k} = (k_1, \ldots, k_r) \in \ZZ^r$, we define $w_{\bm{k}} = j^{k_r}y\cdots j^{k_2}yj^{k_1}y$ to be the {\it corresponding word} in $B$.
\end{defn}

\begin{lem}\label{lem m_k shuffle}
  Let $w$ be a word in $B$. Then,
  \begin{enumerate}
    \item $m_{\bm{k}_{yw}} = \min \{-1, m_{\bm{k}_{w}}-1\}$,
    \item $m_{\bm{k}_{jw}} = m_{\bm{k}_{w}}+1$,
    \item $m_{\bm{k}_{dw}} = m_{\bm{k}_{w}}-1$.
  \end{enumerate}
\end{lem}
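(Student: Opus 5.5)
The statement is a direct bookkeeping computation from the correspondence between words and integer indices, so I would unwind the definitions case by case. Write $w = j^{k_r}y\cdots j^{k_2}yj^{k_1}y$ in canonical form, so that $\bm{k}_w = (k_1,\ldots,k_r)$. Prepending a letter changes only the leftmost block, and in the index that block corresponds to the \emph{last} component $k_r$. This fits well with $m_{\bm{k}}$, which is a minimum over tail sums $\sum_{i=t}^r (k_i-1)$, each of which contains $k_r$.

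For (2) and (3), note that $jw = j^{k_r+1}y\cdots j^{k_1}y$ and $dw = j^{k_r-1}y\cdots j^{k_1}y$. Here we use the rule $jd=dj=\bm{1}$ to merge the new letter with $j^{k_r}$; this is legitimate for either sign of $k_r$ by the convention $j^{-1}=d$. Hence $\bm{k}_{jw} = (k_1,\ldots,k_{r-1},k_r+1)$ and $\bm{k}_{dw} = (k_1,\ldots,k_r-1)$. Every tail sum $\mathrm{wt}(\bm{k}_t)-\mathrm{dep}(\bm{k}_t)$, for $t=1,\ldots,r$, contains $k_r$, so each shifts by exactly $\pm1$. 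The minimum therefore shifts by $\pm1$ as well.

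For (1), the word is $yw = j^0 y j^{k_r}y\cdots j^{k_1}y$, so $\bm{k}_{yw} = (k_1,\ldots,k_r,0)$. The tails of the new index are of two kinds. The last tail is $(0)$, contributing $0-1=-1$. The other tails are $(k_t,\ldots,k_r,0)$, contributing $\sum_{i=t}^r(k_i-1) + (0-1)$, i.e. the old tail values minus $1$. Taking the minimum gives $\min\{-1, m_{\bm{k}_w}-1\}$.

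The only point needing care, and the main obstacle, is the degenerate case $w=\bm{1}$ (the empty index, with $m_\emptyset=\infty$), together with checking that $w$ is taken as the canonical representative in $Wy$. For $w=\bm{1}$, part (1) gives $\bm{k}_{y}=(0)$ and $m=-1=\min\{-1,\infty\}$, which is consistent. For (2) and (3), however, $j\bm{1}$ and $d\bm{1}$ do not end in $y$, so they are not of the canonical form; in the quotient $B$ they lie in $\Tcal$ and vanish. The statement is therefore to be read for words $w\in Wy$, and at the same time one must accept the convention $\infty\pm1=\infty$. With that caveat recorded, the proof is the three computations above.
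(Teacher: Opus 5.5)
Your proof is correct and is the same direct computation as the paper's: identify $\bm{k}_{yw}=(k_1,\ldots,k_r,0)$, $\bm{k}_{jw}=(k_1,\ldots,k_r+1)$ and $\bm{k}_{dw}=(k_1,\ldots,k_r-1)$, then read off the minimum of the tail sums from Definition~\ref{defn m}. Your additional remarks make explicit details the paper leaves implicit: every tail contains $k_r$, and in the degenerate case $w=\bm{1}$ (where $m_\emptyset=\infty$) the words $j\bm{1}$ and $d\bm{1}$ lie in $\Tcal$.
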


\begin{proof}
  We rewrite $\bm{k}_{w}$ as $(k_1, \ldots, k_r)$. By Definition \ref{defn m}, we obtain
  $$m_{\bm{k}_{yw}} = m_{(k_1, \ldots, k_r, 0)} = \min \{-1, m_{\bm{k}_{w}}-1\},$$
  $$m_{\bm{k}_{jw}} = m_{(k_1, \ldots, k_r + 1)} = m_{\bm{k}_{w}}+1,$$
  $$m_{\bm{k}_{dw}} = m_{(k_1, \ldots, k_r - 1)} = m_{\bm{k}_{w}}-1.$$
\end{proof}

We will use Lemma \ref{lem m_k shuffle} several times in the proof of Proposition \ref{prop main shuffle}.

\begin{prop}\label{prop main shuffle}
  Let $\bm{k}, \bm{k}'$ be two integer indices. Then, 
  $$m_{\bm{k} \shuffle \bm{k}'} = \min \{ m_{\bm{k}}, m_{\bm{k}'}, m_{\bm{k}}+m_{\bm{k}'}\}.$$
\end{prop}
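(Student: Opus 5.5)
The plan is to argue by induction on $|w_{\bm{k}}|+|w_{\bm{k}'}|$, using the recursive definition of $\shuffle$ (Definition \ref{defn non-positive shuffle}), reduced via Remark \ref{rem non-positive shuffle} to rules (i), (ii) and $du\shuffle v=d(u\shuffle v)-u\shuffle dv$. Lemma \ref{lem m_k shuffle} tracks how $m$ changes when a letter $y$, $j$ or $d$ is prepended. Throughout write $a=m_{\bm{k}}$, $b=m_{\bm{k}'}$ and $f(a,b)=\min\{a,b,a+b\}$, which is symmetric.

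I would split the statement into two parts. First, the inequality $m_{\bm{k}\shuffle\bm{k}'}\ge f(a,b)$. Second, the existence of a term in $\mathrm{Supp}(\bm{k}\shuffle\bm{k}')$ that attains $f(a,b)$ and has nonzero coefficient. The base case is the one where one word is $\bm{1}$, i.e.\ $\bm{k}'=\emptyset$ with $m_\emptyset=\infty$. There the shuffle is $\bm{k}$ itself and $f(a,\infty)=a$, so both parts are trivial. In the inductive step, each word of $B$ lies in $Wy$ and begins with $y$, $j$ or $d$, according to whether its last component $k_r$ is $0$, positive or negative. By symmetry I can order the cases as follows.

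\emph{Case $yu\shuffle v=y(u\shuffle v)$.} Lemma \ref{lem m_k shuffle}(1) gives $m=\min\{-1,m_{u\shuffle v}-1\}$, and the induction hypothesis applies to $u\shuffle v$. So I must verify the identity
\[
f\bigl(\min\{-1,m_u-1\},b\bigr)=\min\{-1,f(m_u,b)-1\}.
\]
This is a short case check on the signs of $b$ and of $m_u$. I have spot-checked several values, and the essential point is that $\min\{-1,m_u-1\}\le -1$.

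\emph{Case $ju\shuffle jv=j(u\shuffle jv)+j(ju\shuffle v)$.} By Lemma \ref{lem m_k shuffle}(2) the first summand has $m=f(m_u,m_v+1)+1$ and the second has $m=f(m_u+1,m_v)+1$, with $m_u=a-1$ and $m_v=b-1$. Both equal $f(a-1,b)+1$ or $f(a,b-1)+1$, and each of these is $\ge f(a,b)$. One of them equals $f(a,b)$ according to which of $a$, $b$, $a+b$ realizes the minimum. Moreover, since this recursion has only positive coefficients, there is no cancellation here.

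\emph{Case $du\shuffle v=d(u\shuffle v)-u\shuffle dv$.} Here $m_u=a+1$. The first summand has $m=f(a+1,b)-1$, and the second has $m=f(a+1,b-1)$ by Lemma \ref{lem m_k shuffle}(3). Both are $\ge f(a,b)$, which settles the inequality part.

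The main obstacle is the equality part in the cases with signs, namely the $d$ case and the $y$ case after reduction. Terms attaining the minimum could cancel between $d(u\shuffle v)$ and $u\shuffle dv$. To rule this out, I would strengthen the induction hypothesis to track a distinguished extremal term. The candidate is the term in which the leftmost letter is kept, i.e.\ the letter $d$ stays at the front: prepending $d$ produces a word whose first block is $d^{\,\cdot}$, while terms of $u\shuffle dv$ only acquire their leading $d$ from $v$. I would try to show that the minimizing word coming from $d(u\shuffle v)$ cannot occur in $u\shuffle dv$ by comparing lengths and the exponent of the leading block. Failing that, I would compare leading coefficients, for instance by evaluating the extremal coefficient explicitly as a product of binomial-type counts of fixed sign.
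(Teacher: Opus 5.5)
\textbf{Verdict: genuine gap.} Your skeleton is the paper's: induction on $|w_{\bm k}|+|w_{\bm k'}|$, with cases given by rule (i), rule (ii) and the Leibniz form $du\shuffle v=d(u\shuffle v)-u\shuffle dv$, and the effect of prepending $y,j,d$ read off from the lemma on $m_{\bm k_{yw}},m_{\bm k_{jw}},m_{\bm k_{dw}}$. Your $y$-case identity is correct (split on the signs of $m_u$ and $b$), and so are the $j$-case bounds. Even the inequality $m_{\bm k\shuffle\bm k'}\ge f(a,b)$ needs two repairs.

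First, in the $d$ case the bounds $f(a+1,b)-1\ge f(a,b)$ and $f(a+1,b-1)\ge f(a,b)$ are not general facts; both fail for $a=5$, $b=0$. They hold because a word beginning with $d$ has $k_r\le -1$, hence $a\le k_r-1\le -2$, and you should say so.

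Second, and more seriously, when $v$ begins with $d$ the term $u\shuffle dv$ has the same total length as $du\shuffle v$. So your induction hypothesis does not apply to it. The paper avoids this in its Case 3.3 by writing $w_{\bm k}=d^nyu'$ and unrolling
\[
d^nyu'\shuffle dv=\sum_{i=1}^{n}(-1)^{i-1}d\bigl(d^{n-i}yu'\shuffle d^{i}v\bigr)+(-1)^{n}\,yu'\shuffle d^{n+1}v .
\]
Here every inner shuffle has strictly smaller total length, and the last term falls under the $y$ case. Alternatively, add a secondary induction on the number of leading $d$'s.

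The genuine gap is the equality half. You rightly note that a minimizing word might cancel, but you only list strategies. What you have actually established is $m_{\bm k\shuffle\bm k'}\ge\min\{m_{\bm k},m_{\bm k'},m_{\bm k}+m_{\bm k'}\}$, not equality.

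Your claim that the $j$ case is cancellation-free is also unjustified. The step $ju\shuffle jv=j(u\shuffle jv)+j(ju\shuffle v)$ has positive coefficients, but $u\shuffle jv$ and $ju\shuffle v$ are signed as soon as $u$ or $v$ contains a $d$ further in; for example, $ydy\shuffle jy=ydyjy-yyy$. So the two summands can in principle share a minimizing word with opposite signs.

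You will not find this step in the paper either. There, $m$ of $d(u\shuffle jv)-u\shuffle v$, and of the sums in Cases 2 and 3.3, is simply computed as the minimum over the pieces, i.e.\ non-cancellation is taken for granted. To finish you need an actual argument. Two possible routes:

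\emph{Support separation.} If $u$ begins with $y$, every word of $d(u\shuffle jv)$ begins with $dy$ and every word of $u\shuffle v$ begins with $y$, so these two pieces cannot cancel. You would need an invariant of this kind covering all cases.

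\emph{Analytic, when $f(a,b)<0$.} Every rule is an identity of the functions $\Li$. Each $\Li_{\bm l}(z)$ is $O\bigl((1-z)^{\min\{m_{\bm l},0\}}\log^{N}\frac{1}{1-z}\bigr)$ as $z\to 1^-$. On the other hand, the coefficients of $\Li_{\bm l}$ are positive and $\gg n^{-m_{\bm l}-1}$, which gives $\Li_{\bm k}(z)\Li_{\bm k'}(z)\gg (1-z)^{f(a,b)}$. Comparing the two forces a surviving word with $m_{\bm l}=f(a,b)$.

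The range $f(a,b)\ge 0$ would still need its own treatment.
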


\begin{proof}
  We prove the claim by induction on the sum of the lengths of $w_{\bm{k}}$ and $w_{\bm{k}'}$. For $|w_{\bm{k}}| + |w_{\bm{k}'}| = 0$, it is trivial. For $|w_{\bm{k}}| + |w_{\bm{k}'}| = 1$, it suffices to prove the case for $y \shuffle \bm{1}$, that is,
  $$m_{(0) \shuffle \emptyset} = \min \{ m_{(0)}, m_{\emptyset}, m_{(0)}+m_{\emptyset}\},$$
  and it follows that $m_{(0)} = -1$ and $m_{\emptyset} = \infty$. For $|w_{\bm{k}}| + |w_{\bm{k}'}| = 2$,  it suffices to prove the four cases for
  $$jy \shuffle \bm{1}, \ dy \shuffle \bm{1}, \ yy \shuffle \bm{1}, \ y \shuffle y.$$
  It is easy to check
  $$m_{(1) \shuffle \emptyset} = 0 = \min \{ m_{(1)}, m_{\emptyset}, m_{(1)}+m_{\emptyset}\},$$
  $$m_{(-1) \shuffle \emptyset} = -2 = \min \{ m_{(-1)}, m_{\emptyset}, m_{(-1)}+m_{\emptyset}\},$$
  $$m_{(0, 0) \shuffle \emptyset} = -2 = \min \{ m_{(0, 0)}, m_{\emptyset}, m_{(0, 0)}+m_{\emptyset}\},$$
  $$m_{(0) \shuffle (0)} = -2 = \min \{ m_{(0)}, m_{(0)}, m_{(0)}+m_{(0)}\}.$$
  
  Assume $m_{\bm{k} \shuffle \bm{k}'} = \min \{ m_{\bm{k}}, m_{\bm{k}'}, m_{\bm{k}}+m_{\bm{k}'}\}$ holds for two integer indices $\bm{k}, \bm{k}'$ which satisfy $|w_{\bm{k}}| + |w_{\bm{k}'}| < n$. We prove the statement when $|w_{\bm{k}}| + |w_{\bm{k}'}| = n$ using Definition \ref{defn non-positive shuffle} and Remark \ref{rem non-positive shuffle}.
  \begin{enumerate}[\underline{\text{Case}}]
    \item 1. (i) in Definition \ref{defn non-positive shuffle}, i.e., $w_{\bm{k}} = yu$ and $w_{\bm{k}'} = v$. In this case, we obtain
        $$w_{\bm{k}} \shuffle w_{\bm{k}'} = yu \shuffle v = y(u \shuffle v).$$
        By inductive hypothesis, $m_{\bm{k}_{u} \shuffle \bm{k}_{v}} = \min \{ m_{\bm{k}_{u}}, m_{\bm{k}_{v}}, m_{\bm{k}_{u}}+m_{\bm{k}_{v}}\}$.
        \begin{enumerate}[\underline{\text{Case}}]
          \item 1.1. $m_{\bm{k}_{u} \shuffle \bm{k}_{v}} \geq 0$. In this case, we obtain $m_{\bm{k}_{u}} \geq 0$ and $m_{\bm{k}_{v}} \geq 0$. This implies
              $$m_{\bm{k} \shuffle \bm{k}'} = \min\{-1, m_{\bm{k}_{u} \shuffle \bm{k}_{v}}-1\} = -1$$
              and
              $$\min \{m_{\bm{k}_{yu}}, m_{\bm{k}_{v}}, m_{\bm{k}_{yu}}+m_{\bm{k}_{v}}\} = \min \{-1, m_{\bm{k}_{v}}, -1+m_{\bm{k}_{v}}\} = -1.$$
          \item 1.2. $m_{\bm{k}_{u} \shuffle \bm{k}_{v}} < 0$. In this case, we obtain
                    $$m_{\bm{k} \shuffle \bm{k}'} = \min\{-1, m_{\bm{k}_{u} \shuffle \bm{k}_{v}}-1\} = m_{\bm{k}_{u} \shuffle \bm{k}_{v}}-1.$$
                    We prove $\min \{m_{\bm{k}_{yu}}, m_{\bm{k}_{v}}, m_{\bm{k}_{yu}}+m_{\bm{k}_{v}}\} = m_{\bm{k}_{u} \shuffle \bm{k}_{v}}-1$ by considering the following cases.
              \begin{enumerate}[\underline{\text{Case}}]
                \item 1.2.1. $m_{\bm{k}_{u}} \geq 0$ and $m_{\bm{k}_{v}} < 0$. In this case, we obtain
                    $$\min \{m_{\bm{k}_{yu}}, m_{\bm{k}_{v}}, m_{\bm{k}_{yu}}+m_{\bm{k}_{v}}\} = \min \{-1, m_{\bm{k}_{v}}, -1+m_{\bm{k}_{v}}\} = m_{\bm{k}_{v}}-1 = m_{\bm{k}_{u} \shuffle \bm{k}_{v}}-1.$$
                \item 1.2.2. $m_{\bm{k}_{u}} < 0$ and $m_{\bm{k}_{v}} \geq 0$. In this case, we obtain
                    $$\min \{m_{\bm{k}_{yu}}, m_{\bm{k}_{v}}, m_{\bm{k}_{yu}}+m_{\bm{k}_{v}}\} = \min \{m_{\bm{k}_{u}}-1, m_{\bm{k}_{v}}, m_{\bm{k}_{u}}-1+m_{\bm{k}_{v}}\} = m_{\bm{k}_{u}}-1 = m_{\bm{k}_{u} \shuffle \bm{k}_{v}}-1.$$
                \item 1.2.3. $m_{\bm{k}_{u}} < 0$ and $m_{\bm{k}_{v}} < 0$. In this case, we obtain
                    $$\min \{m_{\bm{k}_{yu}}, m_{\bm{k}_{v}}, m_{\bm{k}_{yu}}+m_{\bm{k}_{v}}\} = \min \{m_{\bm{k}_{u}}-1, m_{\bm{k}_{v}}, m_{\bm{k}_{u}}-1+m_{\bm{k}_{v}}\} = m_{\bm{k}_{u}}+m_{\bm{k}_{v}}-1 = m_{\bm{k}_{u} \shuffle \bm{k}_{v}}-1.$$
              \end{enumerate}
        \end{enumerate}
    \item 2. (ii) in Definition \ref{defn non-positive shuffle}, i.e., $w_{\bm{k}} = ju$ and $w_{\bm{k}'} = jv$. In this case, we obtain
        $$w_{\bm{k}} \shuffle w_{\bm{k}'} = ju \shuffle jv = j(u \shuffle jv) + j(ju \shuffle v).$$
        By inductive hypothesis,
        $$m_{\bm{k}_{ju} \shuffle \bm{k}_{v}} = \min \{ m_{\bm{k}_{ju}}, m_{\bm{k}_{v}}, m_{\bm{k}_{ju}}+m_{\bm{k}_{v}}\}$$
        and
        $$m_{\bm{k}_{u} \shuffle \bm{k}_{jv}} = \min \{ m_{\bm{k}_{u}}, m_{\bm{k}_{jv}}, m_{\bm{k}_{u}}+m_{\bm{k}_{jv}}\}.$$
        This implies
        \begin{align*}
          m_{\bm{k} \shuffle \bm{k}'} = & \min \{1+m_{\bm{k}_{ju} \shuffle \bm{k}_{v}}, 1+m_{\bm{k}_{u} \shuffle \bm{k}_{jv}}\} \\
          = & \min \{1+\min \{ m_{\bm{k}_{ju}}, m_{\bm{k}_{v}}, m_{\bm{k}_{ju}}+m_{\bm{k}_{v}}\}, 1+\min \{ m_{\bm{k}_{u}}, m_{\bm{k}_{jv}}, m_{\bm{k}_{u}}+m_{\bm{k}_{jv}}\}\} \\
          = & \min \{m_{\bm{k}_{ju}}+1, m_{\bm{k}_{v}}+1, m_{\bm{k}_{ju}}+m_{\bm{k}_{v}}+1, m_{\bm{k}_{u}}+1, m_{\bm{k}_{jv}}+1, m_{\bm{k}_{u}}+m_{\bm{k}_{jv}}+1\} \\
          = & \min \{m_{\bm{k}_{ju}}+1, m_{\bm{k}_{jv}}, m_{\bm{k}_{ju}}+m_{\bm{k}_{jv}}, m_{\bm{k}_{ju}}, m_{\bm{k}_{jv}}+1, m_{\bm{k}_{ju}}+m_{\bm{k}_{jv}}\} \\
          = & \min \{m_{\bm{k}_{ju}}, m_{\bm{k}_{jv}}, m_{\bm{k}_{ju}}+m_{\bm{k}_{jv}}\}.
        \end{align*}
    \item 3. (iii)–(v)  in Definition \ref{defn non-positive shuffle}, i.e., $w_{\bm{k}} = du$, where we used Remark \ref{rem non-positive shuffle}. We consider three cases as follows.
        \begin{enumerate}[\underline{\text{Case}}]
          \item 3.1. $w_{\bm{k}'} = yv$. In this case, since shuffle product is commutative, this is Case 1.
          \item 3.2. $w_{\bm{k}'} = jv$. In this case, we obtain
              $$du \shuffle jv = d(u \shuffle jv) - u \shuffle djv = d(u \shuffle jv) - u \shuffle v.$$
              By inductive hypothesis, we obtain
              \begin{align*}
                m_{\bm{k} \shuffle \bm{k}'} = & \min \{ m_{\bm{k}_{d(u \shuffle jv)}}, m_{\bm{k}_{u \shuffle v}}\} \\
                = & \min \{ \min \{ m_{\bm{k}_{u}}, m_{\bm{k}_{jv}}, m_{\bm{k}_{u}}+m_{\bm{k}_{jv}}\}-1, \min \{ m_{\bm{k}_{u}}, m_{\bm{k}_{v}}, m_{\bm{k}_{u}}+m_{\bm{k}_{v}}\}\} \\
                = & \min \{ m_{\bm{k}_{u}}-1, m_{\bm{k}_{jv}}-1, m_{\bm{k}_{u}}+m_{\bm{k}_{jv}}-1, m_{\bm{k}_{u}}, m_{\bm{k}_{v}}, m_{\bm{k}_{u}}+m_{\bm{k}_{v}}\} \\
                = & \min \{ m_{\bm{k}_{du}}, m_{\bm{k}_{jv}}-1, m_{\bm{k}_{du}}+m_{\bm{k}_{jv}}\} \\
                = & \min \{ m_{\bm{k}_{du}}, m_{\bm{k}_{du}}+m_{\bm{k}_{jv}}\} \\
                = & \min \{ m_{\bm{k}_{du}}, m_{\bm{k}_{jv}}, m_{\bm{k}_{du}}+m_{\bm{k}_{jv}}\}.
              \end{align*}
              In the penultimate, the equality follows from $m_{\bm{k}_{du}} \leq -1$, and in the last, the equality follows from $m_{\bm{k}_{du}} \leq 0$.
          \item 3.3. $w_{\bm{k}'} = dv$.  In this case, we rewrite $w_{\bm{k}} = d^n yu'$. Then,
              $$w_{\bm{k}} \shuffle w_{\bm{k}'} = d^n yu' \shuffle dv = \sum_{i=1}^{n} (-1)^{i-1}d(d^{n-i} yu' \shuffle d^{i}v) + (-1)^n yu' \shuffle d^{n+1}v.$$
              By inductive hypothesis,
              $$m_{\bm{k}_{d^{n-i} yu'} \shuffle \bm{k}_{d^{i}v}} = \min \{ m_{\bm{k}_{d^{n-i} yu'}}, m_{\bm{k}_{d^{i}v}}, m_{\bm{k}_{d^{n-i} yu'}}+m_{\bm{k}_{d^{i}v}}\}$$
              holds for $i = 1, \ldots, n$. By Case 1,
              $$m_{\bm{k}_{yu'} \shuffle \bm{k}_{d^{n+1}v}} = \min \{ m_{\bm{k}_{yu'}}, m_{\bm{k}_{d^{n+1}v}}, m_{\bm{k}_{yu'}}+m_{\bm{k}_{d^{n+1}v}}\}.$$
              These imply
              \begin{align*}
                m_{\bm{k} \shuffle \bm{k}'} = & \min \{\min \{m_{\bm{k}_{d^{n-i} yu'} \shuffle \bm{k}_{d^{i}v}} -1 | i = 1, \ldots, n\}, m_{\bm{k}_{yu'} \shuffle \bm{k}_{d^{n+1}v}}\} \\
                = & \min \{\min \{\min \{ m_{\bm{k}_{d^{n-i} yu'}}-1, m_{\bm{k}_{d^{i}v}}-1, m_{\bm{k}_{d^{n-i} yu'}}+m_{\bm{k}_{d^{i}v}}-1\} | i = 1, \ldots, n\}, \\
                & \min \{ m_{\bm{k}_{yu'}}, m_{\bm{k}_{d^{n+1}v}}, m_{\bm{k}_{yu'}}+m_{\bm{k}_{d^{n+1}v}}\}\} \\
                = & \min \{\min \{\min \{ m_{\bm{k}_{yu'}}-n+i-1, m_{\bm{k}_{v}}-i-1, m_{\bm{k}_{yu'}}+m_{\bm{k}_{v}}-n-1\} | i = 1, \ldots, n\}, \\
                & \min \{ m_{\bm{k}_{yu'}}, m_{\bm{k}_{v}}-n-1, m_{\bm{k}_{yu'}}+m_{\bm{k}_{v}}-n-1\}\} \\
                = & \min \{ m_{\bm{k}_{yu'}}-n, m_{\bm{k}_{v}}-n-1, m_{\bm{k}_{yu'}}+m_{\bm{k}_{v}}-n-1\} \\
                = & m_{\bm{k}_{yu'}}+m_{\bm{k}_{v}}-n-1 \\
                = & \min \{ m_{\bm{k}_{d^n yu'}}, m_{\bm{k}_{dv}}, m_{\bm{k}_{d^n yu'}}+m_{\bm{k}_{dv}}\}.
              \end{align*}
              In the penultimate, the equality follows from $m_{\bm{k}_{v}}-1, m_{\bm{k}_{yu'}} \leq 0$, and in the last, the equality follows from $m_{\bm{k}_{d^n yu'}}, m_{\bm{k}_{dv}} \leq 0$.
        \end{enumerate}
  \end{enumerate}
\end{proof}

\begin{cor}\label{cor ad shuffle}
  The space $\mathrm{span}_{\QQ}\{\Ical^{\mathrm{adm}}\}$ of admissible integer indices is a $\QQ$-subalgebra of $(\mathrm{span}_{\QQ}\{\Ical\}, \shuffle)$.
\end{cor}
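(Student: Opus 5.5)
The corollary should follow directly from Proposition \ref{prop main shuffle}, combined with the bilinearity of the shuffle product and the fact that $(\mathrm{span}_{\QQ}\{\Ical\}, \shuffle)$ is already a commutative, associative, unital $\QQ$-algebra. The latter holds because the bijection $\phi$ transports the structure of $(B,\shuffle)$ from Proposition \ref{prop non-positive shuffle}. So the task reduces to three checks: $\mathrm{span}_{\QQ}\{\Ical^{\mathrm{adm}}\}$ is a $\QQ$-linear subspace, it contains the unit, and it is closed under $\shuffle$.

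The first two checks are immediate. The subspace property holds by construction, since it is a span. The unit of $(B,\shuffle)$ is the empty word $\bm{1}$, which corresponds to the empty index $\emptyset$. By Definition \ref{defn m} we have $m_{\emptyset} = \infty > 0$, so $\emptyset \in \Ical^{\mathrm{adm}}$.

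For closure, bilinearity of $\shuffle$ means it suffices to show that $\bm{k} \shuffle \bm{k}' \in \mathrm{span}_{\QQ}\{\Ical^{\mathrm{adm}}\}$ for any two admissible integer indices $\bm{k}, \bm{k}'$. Admissibility gives $m_{\bm{k}} > 0$ and $m_{\bm{k}'} > 0$, hence also $m_{\bm{k}} + m_{\bm{k}'} > 0$. This holds in the extended sense as well, when one of the indices is $\emptyset$ and its value is $\infty$. By Proposition \ref{prop main shuffle},
$$m_{\bm{k} \shuffle \bm{k}'} = \min \{ m_{\bm{k}}, m_{\bm{k}'}, m_{\bm{k}}+m_{\bm{k}'}\} > 0.$$
By definition $m_{\Sigma}$ is the minimum of $m_{\bm{l}}$ over $\bm{l} \in \mathrm{Supp}(\Sigma)$, so every index $\bm{l}$ in the support of $\bm{k} \shuffle \bm{k}'$ satisfies $m_{\bm{l}} > 0$. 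In other words, each such $\bm{l}$ is admissible, and $\bm{k}\shuffle\bm{k}'$ is a rational combination of admissible indices.

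The only delicate point, and it is minor, is the case $\bm{k} \shuffle \bm{k}' = 0$, where the support is empty. There the minimum over the empty set is $\infty$ by convention, and in any case $0$ lies in every subspace, so the conclusion still holds. The same argument with $\geq 0$ in place of $> 0$ would show that $\mathrm{span}_{\QQ}\{\Ical^{\mathrm{reg}}\}$ is also a subalgebra, although the corollary as stated does not require this.
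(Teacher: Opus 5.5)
Your proposal is correct and follows the paper's own proof. Both arguments derive closure under $\shuffle$ directly from Proposition \ref{prop main shuffle}, via $m_{\bm{k}\shuffle\bm{k}'}=\min\{m_{\bm{k}},m_{\bm{k}'},m_{\bm{k}}+m_{\bm{k}'}\}>0$, and your extra checks (the unit $\emptyset$ with $m_{\emptyset}=\infty$, reduction to basis elements by bilinearity, and the empty-support case) are details the paper leaves implicit.
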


\begin{proof}
  Let $\bm{k}, \bm{k}'$ be two admissible integer indices. By Proposition \ref{prop main shuffle}, we have
  $$m_{\bm{k} \shuffle \bm{k}'} = \min \{ m_{\bm{k}}, m_{\bm{k}'}, m_{\bm{k}}+m_{\bm{k}'}\} > 0.$$
  This shows that it is closed under the shuffle product, hence we have a subalgebra of $(\mathrm{span}_{\QQ}\{\Ical\}, \shuffle)$.
\end{proof}

\begin{cor}\label{cor re shuffle}
  The space $\mathrm{span}_{\QQ}\{\Ical^{\mathrm{reg}}\}$ of regularizable integer indices is a $\QQ$-subalgebra of $(\mathrm{span}_{\QQ}\{\Ical\}, \shuffle)$.
\end{cor}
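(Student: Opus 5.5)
The plan is to mirror the proof of Corollary \ref{cor ad shuffle}, with the strict inequality replaced by a non-strict one. First, note that $\mathrm{span}_{\QQ}\{\Ical^{\mathrm{reg}}\}$ contains the unit $\emptyset$, since $m_{\emptyset} = \infty \geq 0$. By bilinearity of $\shuffle$, closure under the product reduces to showing that $\bm{k} \shuffle \bm{k}'$ lies in $\mathrm{span}_{\QQ}\{\Ical^{\mathrm{reg}}\}$ for any two regularizable integer indices $\bm{k}, \bm{k}'$.

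For such $\bm{k}, \bm{k}'$ we have $m_{\bm{k}} \geq 0$ and $m_{\bm{k}'} \geq 0$. Proposition \ref{prop main shuffle} then gives
$$m_{\bm{k} \shuffle \bm{k}'} = \min \{ m_{\bm{k}}, m_{\bm{k}'}, m_{\bm{k}}+m_{\bm{k}'}\} \geq 0.$$
The case where one of the indices is $\emptyset$ is harmless. There the minimum is just the other $m$, or $\infty$ if both are empty, and the product is simply the other index.

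By the definition of $m_{\Sigma}$ as the minimum over the support, every integer index $\bm{l} \in \mathrm{Supp}(\bm{k} \shuffle \bm{k}')$ satisfies $m_{\bm{l}} \geq 0$, so each such $\bm{l}$ is regularizable. One convention needs a check: if the product were $0$, its support would be empty, and then it lies in the subspace trivially.

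The subspace is therefore closed under $\shuffle$ and contains the unit. Associativity and commutativity are inherited from Proposition \ref{prop non-positive shuffle} through the bijection $\phi$, so it is a $\QQ$-subalgebra. Everything substantive is already contained in Proposition \ref{prop main shuffle}, so I do not expect any real obstacle here.
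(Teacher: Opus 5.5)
Your proposal is correct and is essentially the paper's proof. Both apply Proposition~\ref{prop main shuffle} to get $m_{\bm{k} \shuffle \bm{k}'} = \min\{m_{\bm{k}}, m_{\bm{k}'}, m_{\bm{k}}+m_{\bm{k}'}\} \geq 0$ for regularizable $\bm{k}, \bm{k}'$, so every index in the support of the product is regularizable; your remarks on the unit $\emptyset$ (with $m_{\emptyset}=\infty$), bilinearity, and the empty-support case are routine bookkeeping the paper leaves implicit.
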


\begin{proof}
  Let $\bm{k}, \bm{k}'$ be two regularizable integer indices. By Proposition \ref{prop main shuffle}, we have
  $$m_{\bm{k} \shuffle \bm{k}'} = \min \{ m_{\bm{k}}, m_{\bm{k}'}, m_{\bm{k}}+m_{\bm{k}'}\} \geq 0.$$
  This shows that it is closed under the shuffle product, hence we have a subalgebra of $(\mathrm{span}_{\QQ}\{\Ical\}, \shuffle)$.
\end{proof}

\begin{thm}\label{thm shuffle-pi+}
  Let $\mathrm{span}_{\QQ}\{\Ical^{\mathrm{adm}}\} = (\mathrm{span}_{\QQ}\{\Ical^{\mathrm{adm}}\}, \shuffle)$, $\mathrm{span}_{\QQ}\{\Ical_{>0}^{\mathrm{adm}}\} = (\mathrm{span}_{\QQ}\{\Ical_{>0}^{\mathrm{adm}}\}, \shuffle)$ be the $\QQ$-algebra with shuffle product $\shuffle$. Then, the positive-index map $\pi^{+}$ is a $\QQ$-algebra homomorphism, that is, the following diagram is commutative.
  $$\begin{tikzcd}
    \mathrm{span}_{\QQ}\{\Ical^{\mathrm{adm}}\} \times \mathrm{span}_{\QQ}\{\Ical^{\mathrm{adm}}\} \arrow[r, "\pi^{+} \otimes \pi^{+}"] \arrow[d, "\shuffle"] & \mathrm{span}_{\QQ}\{\Ical_{>0}^{\mathrm{adm}}\} \times \mathrm{span}_{\QQ}\{\Ical_{>0}^{\mathrm{adm}}\} \arrow[d, "\shuffle"'] \\
    \mathrm{span}_{\QQ}\{\Ical^{\mathrm{adm}}\} \arrow[r, "\pi^{+}"]                & \mathrm{span}_{\QQ}\{\Ical_{>0}^{\mathrm{adm}}\}                
  \end{tikzcd}$$
\end{thm}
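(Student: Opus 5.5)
The plan is to pass through multiple polylogarithms and use the fact that the map $\bm{k}\mapsto \Li_{\bm{k}}(z)$ is injective on $\mathrm{span}_{\QQ}\{\Ical_{>0}\}$. This injectivity is the classical linear independence of the $\Li_{\bm{l}}(z)$ for positive indices $\bm{l}$ over $\QQ$. It can be proved by induction on weight and depth, using the differential equations in the Remark after Definition \ref{defn MPL} and the asymptotic behaviour at $z=0$ and $z=1$. The whole proof then reduces to showing that, for admissible $\bm{k},\bm{k}'$,
\begin{equation*}
\sum_{\bm{l}} c\,\Li_{\bm{l}}(z)\quad\text{for }\pi^+(\bm{k}\shuffle\bm{k}')
\end{equation*}
coincides with the analogous expression for $\pi^+(\bm{k})\shuffle\pi^+(\bm{k}')$.

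First I will define the linear map $L:\mathrm{span}_{\QQ}\{\Ical\}\to\QQ\llbracket z\rrbracket$ by $\bm{k}\mapsto\Li_{\bm{k}}(z)$. By \eqref{eq pi+} in Remark \ref{rem pi+} we have $L\circ\pi^+=L$. By Corollary \ref{cor ad shuffle}, $\bm{k}\shuffle\bm{k}'$ is again admissible, so $\pi^+(\bm{k}\shuffle\bm{k}')$ makes sense.

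The key step is a functional shuffle identity valid for all integer indices, not only positive ones. The natural target is the regularized product
\begin{equation*}
L(\bm{k}\shuffle\bm{k}')=\Phi\bigl(L(\bm{k}),L(\bm{k}')\bigr).
\end{equation*}
Here $\Phi$ must be a bilinear operation on power series that restricts, on positive indices, to the known shuffle identity for $\Li$. I expect the cleanest formulation to be via one-variable integrals. The letters act on series as follows:
\begin{itemize}
\item $j$ is the integration operator $f\mapsto\int_0^z f(t)\,dt/t$;
\item $d$ is $z\,d/dz$;
\item $y$ corresponds to $f\mapsto\int_0^z f(t)\,dt/(1-t)$.
\end{itemize}
The last assignment holds after an appropriate normalisation of which end carries $z^{n_r}$; I would double-check the orientation of the words $w_{\bm{k}}$. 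On $B$, the operations $j$ and $d$ are mutually inverse because $\Tcal$ kills the constant-term ambiguity.

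Under this dictionary I would then verify that the rules of Definition \ref{defn non-positive shuffle} are exactly the integration-by-parts and Leibniz rules for the pointwise product of the realized functions. Rule (ii) is integration by parts for $\int dt/t$. The relation in Remark \ref{rem non-positive shuffle}, $d(u\shuffle v)=du\shuffle v+u\shuffle dv$, is the Leibniz rule for $z\,d/dz$, and this is precisely the generator of $\Lcal$. Rule (i) uses that $y$ is a derivation-compatible integration against $dt/(1-t)$. Granting this, induction on total length, along the recursion already used in Proposition \ref{prop main shuffle}, gives
\begin{equation*}
L(\bm{k}\shuffle\bm{k}')=L(\bm{k})\cdot L(\bm{k}')
\end{equation*}
as power series in $z$, for all $\bm{k},\bm{k}'\in\Ical$. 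This holds in particular on positive indices, where it is the classical statement. The realization is well-defined on $B$, since the generators of $\Tcal$ and $\Lcal$ map to zero; this is a check I would carry out separately.

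The conclusion then follows directly:
\begin{equation*}
L(\pi^+(\bm{k}\shuffle\bm{k}'))=L(\bm{k}\shuffle\bm{k}')=L(\bm{k})L(\bm{k}')=L(\pi^+\bm{k})\,L(\pi^+\bm{k}')=L(\pi^+\bm{k}\shuffle\pi^+\bm{k}').
\end{equation*}
Both arguments on the outer sides lie in $\mathrm{span}_{\QQ}\{\Ical_{>0}^{\mathrm{adm}}\}$, so injectivity of $L$ on positive indices gives the commutativity of the diagram.

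The main obstacle is the realization step: checking that the EMS shuffle on $B$ really corresponds to the pointwise product of $\Li$'s, including the quotient by $\Tcal+\Lcal$ and the correct treatment of constants. This works because $\Li_{\bm{k}}(0)=0$ for nonempty $\bm{k}$, which makes $\int_0^z\circ\, z\frac{d}{dz}=\mathrm{id}$ on the relevant series. I would first test it on the depth-one case $(a)\shuffle(b)$ with $a\le 0$ before running the general induction.
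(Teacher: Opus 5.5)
You take a genuinely different route from the paper, and it can be made to work, but the key realization step fails as you set it up: the letter $y$ is not integration against $dt/(1-t)$. Write $Yf=\int_0^z f(t)\,\frac{dt}{1-t}$. Then the word $w_{\bm k}=j^{k_r}y\cdots j^{k_1}y$ would realize to $\Li_{k_1+1,\ldots,k_r+1}(z)$ rather than $\Li_{\bm k}(z)$. More seriously, rule (i) of Definition \ref{defn non-positive shuffle} is false for $Y$, because $Y(f)\,g\neq Y(fg)$. Already $y\shuffle y=yy$, while $Y(1)^2=\Li_1(z)^2=2\Li_{1,1}(z)=2\,Y(Y(1))$. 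No reorientation of the words repairs this, since the operator identity itself fails.

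The correct dictionary is $y\mapsto$ multiplication by $z/(1-z)$. It is forced by
\[
\Li_{k_1,\ldots,k_{r-1},0}(z)=\frac{z}{1-z}\,\Li_{k_1,\ldots,k_{r-1}}(z),\qquad \Li_{0}(z)=\frac{z}{1-z}.
\]
Your $Y$ is $J$ composed with this operator, which explains the shift. With $j\mapsto J=\int_0^z\,\cdot\,\frac{dt}{t}$, $d\mapsto\theta=z\frac{d}{dz}$ and this $y$, the rules become elementary:
\begin{itemize}
\item rule (i) is trivial;
\item rule (ii) is $Jf\cdot Jg=J(f\,Jg)+J(Jf\cdot g)$ for $f(0)=g(0)=0$;
\item rules (iii)--(v) are the Leibniz rule for $\theta$ together with $\theta J=\mathrm{id}$.
\end{itemize}
As a check, $(0)\shuffle(0)=(0,0)$, and indeed $\Li_{0,0}(z)=z^2/(1-z)^2=\Li_0(z)^2$.

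For descent to $B$, put $\rho=0$ on $\Tcal$. Starting from two words in $Wy\cup\{\mathbf{1}\}$, the recursion only produces elements of $\mathrm{span}_{\QQ}(Wy\cup\{\mathbf{1}\})$. Hence $J$ is only ever applied to series vanishing at $0$, and induction on length gives multiplicativity. Moreover, $\ker\rho$ is stable under left concatenation with $j,d$ and contains the generators of $\Lcal$. So $\rho$ descends to $B$, and $L=\rho\circ\phi$ satisfies $L(\bm k\shuffle\bm k')=L(\bm k)L(\bm k')$ for all integer indices.

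The rest of your argument is then correct:
\begin{itemize}
\item $L\circ\pi^{+}=L$ is \eqref{eq pi+}.
\item On positive words (words in $x_0=j$, $x_1=jy$), rules (i)--(ii) reduce to the classical shuffle, so $\pi^{+}(\bm k)\shuffle\pi^{+}(\bm k')\in\mathrm{span}_{\QQ}\{\Ical^{\mathrm{adm}}_{>0}\}$.
\item $\QQ$-linear independence of $\{\Li_{\bm l}(z)\}_{\bm l\in\Ical_{>0}}$ is classical.
\end{itemize}

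The paper uses no linear independence at all. It proves $\pi^{+}\circ\shuffle\circ(\pi^{+}\otimes\pi^{+})=\pi^{+}\circ\shuffle$ by a purely combinatorial induction on $|w_{\bm k}|+|w_{\bm k'}|$ following rules (i)--(v). The ingredients are commutation rules such as $\pi^{+}(dw)=d\pi^{+}(w)$, $\pi^{+}(jw)=j\pi^{+}(w)$ and $\pi^{+}(y\pi^{+}(u))=\pi^{+}(yu)$, plus a nested induction on $m_{\bm k_{u'}}$ with explicit Faulhaber/Bernoulli bookkeeping. To run this, it must use the extension of $\pi^{+}$ to non-admissible indices from Remark \ref{rem pi+}.

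Your repaired route replaces all of this with one transcendence-type input and the observation that the EMS rules are exactly the product rules for $J$, $\theta$ and multiplication by $z/(1-z)$. It is much shorter and only ever evaluates $\pi^{+}$ on admissible indices. It also shows in passing that $\pi^{+}(\bm k)$ is the unique element of $\mathrm{span}_{\QQ}\{\Ical_{>0}\}$ with the same $\Li$, so the uniqueness asserted in Definition \ref{defn pi+} comes for free.
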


\begin{proof}
  We claim the general formula below
  $$\pi^{+} \circ \shuffle \circ (\pi^{+} \otimes \pi^{+}) =  \pi^{+} \circ \shuffle : \mathrm{span}_{\QQ}\{\Ical\} \times \mathrm{span}_{\QQ}\{\Ical\} \to \mathrm{span}_{\QQ}\{\Ical\},$$
  whose restriction to $\mathrm{span}_{\QQ}\{\Ical^{\mathrm{adm}}\} \times \mathrm{span}_{\QQ}\{\Ical^{\mathrm{adm}}\}$ yields our claim. Note that the image of $\shuffle \circ (\pi^{+} \otimes \pi^{+})$ is in $\mathrm{span}_{\QQ}\{\Ical_{>0}^{\mathrm{adm}}\}$, and the linear map $\pi^{+}$ on $\mathrm{span}_{\QQ}\{ \Ical_{>0}^{\mathrm{adm}}\}$ is the identity map. Hence, the equation
  $$\shuffle \circ (\pi^{+} \otimes \pi^{+}) = \mathrm{id} \circ \shuffle \circ (\pi^{+} \otimes \pi^{+}) =  \pi^{+} \circ \shuffle$$
  holds, when restricted to $\mathrm{span}_{\QQ}\{\Ical^{\mathrm{adm}}\} \times \mathrm{span}_{\QQ}\{\Ical^{\mathrm{adm}}\}$.\\
  Let $\bm{k}, \bm{k}'$ be two admissible integer indices. We prove the claim by induction on the sum of the lengths of $w_{\bm{k}}$ and $w_{\bm{k}'}$. For $|w_{\bm{k}}| + |w_{\bm{k}'}| = 0$, it is trivial. For $|w_{\bm{k}}| + |w_{\bm{k}'}| = 1$, it suffices to prove the case for $y \shuffle \bm{1}$, that is,
  $$\pi^{+} \circ \shuffle \circ (\pi^{+} \otimes \pi^{+})((0), \emptyset) = \pi^{+} \circ \shuffle (\pi^{+}(0) \otimes \pi^{+}(\emptyset)) = \pi^{+} ((0) \shuffle \emptyset),$$
  and it follows that $\pi^{+}(0) = (0)$ and $\pi^{+}(\emptyset) = \emptyset$. For $|w_{\bm{k}}| + |w_{\bm{k}'}| = 2$,  it suffices to prove the four cases for
  $$jy \shuffle \bm{1}, \ dy \shuffle \bm{1}, \ yy \shuffle \bm{1}, \ y \shuffle y.$$
  It is easy to check
  $$\pi^{+} \circ \shuffle \circ (\pi^{+} \otimes \pi^{+})((1), \emptyset) = \pi^{+} \circ \shuffle (\pi^{+}(1) \otimes \pi^{+}(\emptyset)) = \pi^{+} ((1) \shuffle \emptyset),$$
  $$\pi^{+} \circ \shuffle \circ (\pi^{+} \otimes \pi^{+})((-1), \emptyset) = \pi^{+} \circ \shuffle (\pi^{+}(-1) \otimes \pi^{+}(\emptyset)) = \pi^{+} ((-1) \shuffle \emptyset),$$
  $$\pi^{+} \circ \shuffle \circ (\pi^{+} \otimes \pi^{+})((0, 0), \emptyset) = \pi^{+} \circ \shuffle (\pi^{+}(0, 0) \otimes \pi^{+}(\emptyset)) = \pi^{+} (\pi^{+}(0, 0)) = \pi^{+}(0, 0) = \pi^{+} ((0, 0) \shuffle \emptyset),$$
  $$\pi^{+} \circ \shuffle \circ (\pi^{+} \otimes \pi^{+})((0), (0)) = \pi^{+} \circ \shuffle (\pi^{+}(0) \otimes \pi^{+}(0)) = \pi^{+} ((0) \shuffle (0)).$$
  Assume $\pi^{+} \circ \shuffle \circ (\pi^{+} \otimes \pi^{+}) =  \pi^{+} \circ \shuffle$ holds for two integer indices $\bm{k}, \bm{k}'$ which satisfy $|w_{\bm{k}}| + |w_{\bm{k}'}| < n$. We prove the equation for $|w_{\bm{k}}| + |w_{\bm{k}'}| = n$ using Definition \ref{defn non-positive shuffle} and Remark \ref{rem non-positive shuffle}.
  \begin{enumerate}[\underline{\text{Case}}]
    \item 1. (i) in Definition \ref{defn non-positive shuffle}, i.e., $w_{\bm{k}} = yu$ and $w_{\bm{k}'} = v$. By Definition \ref{defn pi+} and Remark \ref{rem pi+}, it is easy to see $\pi^{+}(y\pi^{+}(u)) = \pi^{+}(yu)$. In this case, we obtain
        \begin{align*}
          \pi^{+}(yu \shuffle v) = & \pi^{+}(y(u \shuffle v)) \\
          = & \pi^{+}(y \pi^{+}(u \shuffle v)) \\
          = & \pi^{+}(y(\pi^{+}(u) \shuffle \pi^{+}(v)))
        \end{align*}
        by inductive hypothesis and
        $$\pi^{+}(\pi^{+}(yu) \shuffle \pi^{+}(v)) = \pi^{+}(\pi^{+}(y\pi^{+}(u)) \shuffle \pi^{+}(v)).$$
        It suffices to show that
        $$\pi^{+}(y(u' \shuffle v')) = \pi^{+}(\pi^{+}(yu') \shuffle v')$$
        for $u' \in \{w_{\bm{l}} | \bm{l} \in \mathrm{Supp}(\pi^{+}(\bm{k}))\}$ and $v' \in \{w_{\bm{l}'} | \bm{l}' \in \mathrm{Supp}(\pi^{+}(\bm{k}'))\}$.
        We consider the following cases.
        \begin{enumerate}[\underline{\text{Case}}]
          \item 1.1. $m_{\bm{k}_{u'}} \geq 0$. Since $u' \in \{w_{\bm{l}} | \bm{l} \in \mathrm{Supp}(\pi^{+}(\bm{k}))\}$, it is easy to see $\pi^{+}(yu') = yu'$ in this case. We use above to obtain
              \begin{align*}
                \pi^{+}(\pi^{+}(yu') \shuffle v') = & \pi^{+}(yu' \shuffle v') \\
                = & \pi^{+}(y(u' \shuffle v')).
              \end{align*}
          \item 1.2. $m_{\bm{k}_{u'}} < 0$. In this case, we prove the claim by induction on $m_{\bm{k}_{u'}}$. For $m_{\bm{k}_{u'}} = -1$, that is, $u' = yu''$. We obtain
              \begin{align*}
                \pi^{+}(\pi^{+}(yyu'') \shuffle v') = & \pi^{+}((dyu'' - ydu'' - yu'') \shuffle v') \\
                = & \pi^{+}(dy(u'' \shuffle v') - yd(u'' \shuffle v') - y(u'' \shuffle v'))
              \end{align*}
              and
              \begin{align*}
                \pi^{+}(y(yu'' \shuffle v')) = & \pi^{+}(yy(u'' \shuffle v')) \\
                = & \pi^{+}(dy(u'' \shuffle v') - yd(u'' \shuffle v') - y(u'' \shuffle v')).
              \end{align*}
              Assume $\pi^{+}(y(u' \shuffle v')) = \pi^{+}(\pi^{+}(yu') \shuffle v')$ holds for $u' \in \{w_{\bm{l}} | \bm{l} \in \mathrm{Supp}(\pi^{+}(\bm{k}))\}$ and $v' \in \{w_{\bm{l}'} | \bm{l}' \in \mathrm{Supp}(\pi^{+}(\bm{k}'))\}$ which satisfy $m_{\bm{k}_{u'}} > - m$. We prove the equation for $m_{\bm{k}_{u'}} = - m - 1$. It suffices to prove the case for $u' = d^{m}yu''$. We obtain
              \begin{align*}
                & \pi^{+}(\pi^{+}(yd^{m}yu'') \shuffle v')\\
                = & \pi^{+}\left(\pi^{+}\left(\frac{1}{m+1} \sum_{i=0}^{m}\binom{m+1}{i} B^-_i d^{m+1-i}yu'' - \delta_{m, 0} yu'' \right.\right. \\
                & \qquad \qquad \left.\left.- \frac{1}{m+1} \sum_{i=0}^{m}\binom{m+1}{i} B^+_i yd^{m+1-i}u'' \right) \shuffle v'\right)\\
                = & \pi^{+}\left(\frac{1}{m+1} \sum_{i=0}^{m}\binom{m+1}{i} B^-_i \pi^{+}(d^{m+1-i}yu'') \shuffle v' \right. \\
                & \qquad \left. - \frac{1}{m+1} \sum_{i=0}^{m}\binom{m+1}{i} B^+_i \pi^{+}(yd^{m+1-i}u'') \shuffle v' \right).
              \end{align*}
              By Definition \ref{defn pi+} and Remark \ref{rem pi+}, it is easy to see $\pi^{+}(dw) = d\pi^{+}(w)$. Since $u' \in \{w_{\bm{l}} | \bm{l} \in \mathrm{Supp}(\pi^{+}(\bm{k}))\}$ and $v' \in \{w_{\bm{l}'} | \bm{l}' \in \mathrm{Supp}(\pi^{+}(\bm{k}'))\}$, we have
              \begin{align*}
                & \pi^{+}(\pi^{+}(yd^{m}yu'') \shuffle v')\\
                = & \pi^{+}\left(\frac{1}{m+1} \sum_{i=0}^{m}\binom{m+1}{i} B^-_i d^{m+1-i}\pi^{+}(yu'') \shuffle \pi^{+}(v') \right. \\
                & \qquad \left. - \frac{1}{m+1} \sum_{i=0}^{m}\binom{m+1}{i} B^+_i \pi^{+}(yd^{m+1-i}u'') \shuffle \pi^{+}(v') \right) \\
                = & \pi^{+}\left(\frac{1}{m+1} \sum_{i=0}^{m}\binom{m+1}{i} B^-_i \pi^{+}(d^{m+1-i}yu'' \shuffle v') \right. \\
                & \qquad \left. - \frac{1}{m+1} \sum_{i=0}^{m}\binom{m+1}{i} B^+_i \pi^{+}(yd^{m+1-i}u'' \shuffle v') \right) \\
                = & \pi^{+}\left(\frac{1}{m+1} \sum_{i=0}^{m}\binom{m+1}{i} B^-_i d^{m+1-i}yu'' \shuffle v' \right. \\
                & \qquad \left. - \frac{1}{m+1} \sum_{i=0}^{m}\binom{m+1}{i} B^+_i yd^{m+1-i}u'' \shuffle v' \right)
              \end{align*}
              by inductive hypothesis. We use
              $$d^{n}u \shuffle v = \sum_{j = 0}^{n} (-1)^{j} \binom{n}{j} d^{n - j}(u \shuffle d^{i}v)$$
              to obtain
              \begin{align*}
                & \pi^{+}(\pi^{+}(yd^{m}yu'') \shuffle v')\\
                = & \pi^{+}\left( \frac{1}{m+1} \sum_{i=0}^{m}\binom{m+1}{i} B^-_i \left(\sum_{j = 0}^{m+1-i} (-1)^{j} \binom{m+1-i}{j} d^{m+1-i - j}y(u'' \shuffle d^{j}v')\right) \right. \\
                & \qquad \left. - \frac{1}{m+1} \sum_{i=0}^{m}\binom{m+1}{i} B^+_i \left(y\sum_{j = 0}^{m+1-i} (-1)^{j} \binom{m+1-i}{j} d^{m+1-i - j}(u'' \shuffle d^{j}v')\right)\right)\\
                = & \pi^{+}\left( \frac{1}{m+1} \sum_{i=0}^{m}\binom{m+1}{i} B^-_i \left(\sum_{j = 0}^{m-i} (-1)^{j} \binom{m+1-i}{j} d^{m+1-i - j}y(u'' \shuffle d^{j}v')\right) \right. \\
                & \qquad \left. - \frac{1}{m+1} \sum_{i=0}^{m}\binom{m+1}{i} B^+_i \left(y\sum_{j = 0}^{m-i} (-1)^{j} \binom{m+1-i}{j} d^{m+1-i - j}(u'' \shuffle d^{j}v')\right)\right)\\
                & \qquad - (-1)^{m} \pi^{+}(y(u'' \shuffle d^{m}v'))\\
                = & \pi^{+}\left( \sum_{i=0}^{m} \sum_{j = 0}^{m-i} (-1)^{j} \frac{m!}{i!j!} \left( B^-_i d^{m+1-i - j}y(u'' \shuffle d^{j}v') - B^+_i d^{m+1-i - j}(u'' \shuffle d^{j}v') \right) \right)\\
                & \qquad - (-1)^{m} \pi^{+}(y(u'' \shuffle d^{m}v')).
              \end{align*}
              On the other hand, we have
              \begin{align*}
                & \pi^{+}(y(d^{m}yu'' \shuffle v'))\\
                = & \pi^{+}\left(y\sum_{j = 0}^{m} (-1)^{j} \binom{m}{j} d^{m - j}y(u'' \shuffle d^{j}v')\right)\\
                = & \pi^{+}\left(\sum_{j = 0}^{m} (-1)^{j} \binom{m}{j} \pi^{+}(yd^{m - j}y(u'' \shuffle d^{j}v'))\right) \\
                = & \pi^{+}\left(\sum_{j = 0}^{m} (-1)^{j} \binom{m}{j} \pi^{+}\left(\frac{1}{m-j+1} \sum_{i=0}^{m-j}\binom{m-j+1}{i} B^-_i d^{m-j+1-i}y(u'' \shuffle d^{j}v')\right. \right. \\
                & \left. \left.  - \delta_{m-j, 0} y(u'' \shuffle d^{j}v') - \frac{1}{m-j+1} \sum_{i=0}^{m-j}\binom{m-j+1}{i} B^+_i yd^{m-j+1-i}(u'' \shuffle d^{j}v') \right)\right) \\
                = & \pi^{+}\left(\sum_{j = 0}^{m} (-1)^{j} \binom{m}{j} \pi^{+}\left(\frac{1}{m-j+1} \sum_{i=0}^{m-j}\binom{m-j+1}{i} B^-_i d^{m-j+1-i}y(u'' \shuffle d^{j}v')\right. \right. \\
                & \left. \left. - \frac{1}{m-j+1} \sum_{i=0}^{m-j}\binom{m-j+1}{i} B^+_i yd^{m-j+1-i}(u'' \shuffle d^{j}v') \right)\right) - (-1)^{m} \pi^{+}(y(u'' \shuffle d^{m}v')) \\
                = & \pi^{+}\left( \sum_{j=0}^{m} \sum_{i = 0}^{m-j} (-1)^{j} \frac{m!}{i!j!} \left( B^-_i d^{m+1-i - j}y(u'' \shuffle d^{j}v') - B^+_i d^{m+1-i - j}(u'' \shuffle d^{j}v') \right) \right) \\
                & - (-1)^{m} \pi^{+}(y(u'' \shuffle d^{m}v')) \\
                = & \pi^{+}\left( \sum_{i=0}^{m} \sum_{j = 0}^{m-i} (-1)^{j} \frac{m!}{i!j!} \left( B^-_i d^{m+1-i - j}y(u'' \shuffle d^{j}v') - B^+_i d^{m+1-i - j}(u'' \shuffle d^{j}v') \right) \right)\\
                & \qquad - (-1)^{m} \pi^{+}(y(u'' \shuffle d^{m}v')).
              \end{align*}
        \end{enumerate}
    \item 2. (ii) in Definition \ref{defn non-positive shuffle}, i.e., $w_{\bm{k}} = ju$ and $w_{\bm{k}'} = jv$. By Definition \ref{defn pi+} and Remark \ref{rem pi+}, it is easy to see $j\pi^{+}(u) = \pi^{+}(ju)$. In this case, we obtain
        \begin{align*}
          \pi^{+}(ju \shuffle jv) = & \pi^{+}(j(u \shuffle jv) + j(ju \shuffle v)) \\
          = & \pi^{+}(j(\pi^{+}(u \shuffle jv) + \pi^{+}(ju \shuffle v))) \\
          = & \pi^{+}(j(\pi^{+}(u) \shuffle \pi^{+}(jv) + \pi^{+}(ju) \shuffle \pi^{+}(v))) \\
          = & \pi^{+}(j(\pi^{+}(u) \shuffle j\pi^{+}(v) + j\pi^{+}(u) \shuffle \pi^{+}(v))) \\
          = & \pi^{+}((j\pi^{+}(u)) \shuffle (j\pi^{+}(v))) \\
          = & \pi^{+}(\pi^{+}(ju) \shuffle \pi^{+}(jv)).
        \end{align*}
        by inductive hypothesis.
    \item 3. (iii)–(v)  in Definition \ref{defn non-positive shuffle}, i.e., $w_{\bm{k}} = d^{m}yu$, where we used Remark \ref{rem non-positive shuffle}. By Definition \ref{defn pi+} and Remark \ref{rem pi+}, it is easy to see $d\pi^{+}(u) = \pi^{+}(du)$. We use
        $$d^{n}u \shuffle v = \sum_{j = 0}^{n} (-1)^{j} \binom{n}{j} d^{n - j}(u \shuffle d^{i}v)$$
        to obtain
        \begin{align*}
          \pi^{+}(d^{m}yu \shuffle v) = & \pi^{+}\left(\sum_{j = 0}^{n} (-1)^{j} \binom{n}{j} d^{n - j}y(u \shuffle d^{i}v)\right) \\
          = & \pi^{+}\left(\sum_{j = 0}^{n} (-1)^{j} \binom{n}{j} \pi^{+}(d^{n - j}y(u \shuffle d^{i}v))\right) \\
          = & \pi^{+}\left(\sum_{j = 0}^{n} (-1)^{j} \binom{n}{j} d^{n - j}\pi^{+}(y(u \shuffle d^{i}v))\right) \\
          = & \pi^{+}\left(\sum_{j = 0}^{n} (-1)^{j} \binom{n}{j} d^{n - j}\pi^{+}(yu) \shuffle \pi^{+}(d^{i}v)\right) \\
          = & \pi^{+}\left(\sum_{j = 0}^{n} (-1)^{j} \binom{n}{j} d^{n - j}\pi^{+}(yu) \shuffle d^{i}\pi^{+}(v)\right) \\
          = & \pi^{+}(d^{n}\pi^{+}(yu) \shuffle \pi^{+}(v))) \\
          = & \pi^{+}(\pi^{+}(d^{n}yu) \shuffle \pi^{+}(v))).
        \end{align*}
        by inductive hypothesis and Case 1.
  \end{enumerate}
\end{proof}

\begin{rem}
  By the same argument of the proof of Theorem \ref{thm shuffle-pi+}, we obtain the positive-index map $\pi^{+}$ is also a $\QQ$-algebra homomorphism between the algebra of regularizable integer index, that is, the following diagram is commutative.
  $$\begin{tikzcd}
    \mathrm{span}_{\QQ}\{\Ical^{\mathrm{reg}}\} \times \mathrm{span}_{\QQ}\{\Ical^{\mathrm{reg}}\} \arrow[r, "\pi^{+} \otimes \pi^{+}"] \arrow[d, "\shuffle"] & \mathrm{span}_{\QQ}\{\Ical_{>0}^{\mathrm{reg}}\} \times \mathrm{span}_{\QQ}\{\Ical_{>0}^{\mathrm{reg}}\} \arrow[d, "\shuffle"'] \\
    \mathrm{span}_{\QQ}\{\Ical^{\mathrm{reg}}\} \arrow[r, "\pi^{+}"]                & \mathrm{span}_{\QQ}\{\Ical_{>0}^{\mathrm{reg}}\}                
  \end{tikzcd}$$
\end{rem}

\section{Stuffle product for integer indices}\label{section Stuffle product for integer indices}
In this section, we recall the stuffle product of multiple polylogarithms with integer indices and prove that the stuffle product of two admissible (resp. regularizable) integer indices can be expanded as a linear combination of integer indices that remain admissible (resp. regularizable). We show that the positive-index map is an algebra homomorphism with respect to stuffle product (Theorem \ref{thm stuffle-pi+}).

We start by recalling recall the stuffle product for integer indices.
\begin{defn}\label{defn non-positive stuffle}
  We define the stuffle product
  $$\ast : \mathrm{span}_{\QQ}\{\Ical\} \times \mathrm{span}_{\QQ}\{\Ical\} \rightarrow \mathrm{span}_{\QQ}\{\Ical\}$$
  recursively by:
  $$\emptyset \ast \bm{k} = \bm{k} \ast \emptyset \coloneqq \bm{k}$$
  where $\bm{k} \in \Ical$ is an integer index, and for two integer indices $(\bm{k}, k), (\bm{k}', k')$ with $k, k'\in \ZZ$ and $\bm{k}, \bm{k}'\in \Ical$,
  $$(\bm{k}, k) \ast (\bm{k}', k') \coloneqq (\bm{k} \ast (\bm{k}', k'), k) + ((\bm{k}, k) \ast \bm{k}', k') + (\bm{k}\ast \bm{k}', k+k').$$
\end{defn}

The space $\mathrm{span}_{\QQ}\{\Ical\}$ is a $\QQ$-algebra with the stuffle product.

\begin{lem}\label{lem m_k stuffle}
  Let $\bm{k}$ be an integer index and $k$ be an integer. Then,
  $$m_{(\bm{k}, k)} = \min\{m_{\bm{k}}+k-1, k-1\}.$$
\end{lem}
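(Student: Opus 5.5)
This follows by unwinding Definition \ref{defn m} for the index $(\bm{k},k)$, in the same way as the proof of Lemma \ref{lem m_k shuffle}. Write $\bm{k}=(k_1,\ldots,k_r)$ and set $k_{r+1}=k$, so that $(\bm{k},k)=(k_1,\ldots,k_r,k_{r+1})$. The tail indices of $(\bm{k},k)$ are $(\bm{k},k)_t=(\bm{k}_t,k)$ for $t=1,\ldots,r$, together with $(\bm{k},k)_{r+1}=(k)$.

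For the tails with $t\le r$,
$$\mathrm{wt}((\bm{k}_t,k))-\mathrm{dep}((\bm{k}_t,k))=\sum_{i=t}^{r}(k_i-1)+(k-1),$$
and the last tail contributes $k-1$. Taking the minimum over all $t$ gives
$$m_{(\bm{k},k)}=\min\Bigl\{\min_{1\le t\le r}\sum_{i=t}^{r}(k_i-1)+(k-1),\ k-1\Bigr\}.$$
Since $k-1$ is a constant shift, it can be pulled out of the inner minimum, which by Definition \ref{defn m} is exactly $m_{\bm{k}}$. Hence
$$m_{(\bm{k},k)}=\min\{m_{\bm{k}}+k-1,\ k-1\}.$$

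It remains to treat the edge case $\bm{k}=\emptyset$. Here $(\bm{k},k)=(k)$, so $m_{(k)}=k-1$. With the convention $m_{\emptyset}=\infty$ we get $m_{\emptyset}+k-1=\infty$, and the right-hand side is again $\min\{\infty,k-1\}=k-1$. The formula therefore holds in this case as well.

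The computation is direct; the only point that needs care is this convention $m_\emptyset=\infty$, since for $\bm{k}=\emptyset$ the set of $t$ with $1\le t\le r$ is empty.
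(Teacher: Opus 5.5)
Your proof is correct and is essentially the paper's argument: both just unwind Definition \ref{defn m} for $(k_1,\ldots,k_r,k)$, and the paper's case split ($m_{\bm{k}}+k-1$ when $m_{\bm{k}}\le 0$, and $k-1$ when $m_{\bm{k}}>0$) is the same as your minimum. Your explicit tail decomposition $(\bm{k},k)_t=(\bm{k}_t,k)$ and your check of the case $\bm{k}=\emptyset$ via $m_{\emptyset}=\infty$ simply spell out details the paper leaves implicit.
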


\begin{proof}
  We rewrite $\bm{k}$ as $(k_1, \ldots, k_r)$. By Definition \ref{defn m}, we obtain
  $$m_{(\bm{k}, k)} = m_{(k_1, \ldots, k_r, k)} = m_{\bm{k}}+k-1$$
  if $m_{\bm{k}} \leq 0$;
  $$m_{(\bm{k}, k)} = m_{(k_1, \ldots, k_r, k)} = k-1$$
  if $m_{\bm{k}} > 0$.
\end{proof}

We will use Lemma \ref{lem m_k stuffle} several times in the proof of Proposition \ref{prop main stuffle}.

\begin{prop}\label{prop main stuffle}
  Let $\bm{k}, \bm{k}'$ be two integer indices. Then, 
  $$m_{\bm{k} \ast \bm{k}'} = \min \{ m_{\bm{k}}, m_{\bm{k}'}, m_{\bm{k}}+m_{\bm{k}'}\}.$$
\end{prop}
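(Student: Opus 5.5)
We argue by induction on $\mathrm{dep}(\bm{k})+\mathrm{dep}(\bm{k}')$, along the same lines as the proof of Proposition \ref{prop main shuffle}, but with the recursion of Definition \ref{defn non-positive stuffle} in place of the shuffle rules.

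\emph{Reduction to the support.} Every integer index produced by the recursion in Definition \ref{defn non-positive stuffle} appears with a positive integer coefficient. Hence no cancellation can occur, and $\mathrm{Supp}(\bm{k}\ast\bm{k}')$ is exactly the union of the supports of the three summands. Consequently $m_{\bm{k}\ast\bm{k}'}$ is the minimum of the regularizability indices of the three summands. This is what makes the min-computation legitimate, and I would state it explicitly at the start.

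\emph{Base case.} If $\bm{k}=\emptyset$ or $\bm{k}'=\emptyset$, the formula holds trivially because $m_\emptyset=\infty$.

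\emph{Inductive step: computing the three minima.} Write $\bm{k}=(\bm{a},k)$ and $\bm{k}'=(\bm{b},k')$, and set $\alpha=m_{\bm{a}}$, $\beta=m_{\bm{b}}$. Put $f(x)=\min\{x,0\}$, with the convention $f(\infty)=0$. Lemma \ref{lem m_k stuffle} gives
\[
A:=m_{\bm{k}}=k-1+f(\alpha),\qquad B:=m_{\bm{k}'}=k'-1+f(\beta),
\]
and more generally $m_{(\Sigma,c)}=c-1+f(m_\Sigma)$ for any $\Sigma$ whose support consists of indices. Set $h(x,y)=\min\{x,y,x+y\}$. Checking sign cases shows
\[
h(x,y)=\min\{x,y\}+f(\max\{x,y\}).
\]
By the induction hypothesis:
\begin{enumerate}[(1)]
\item The first summand $(\bm{a}\ast\bm{k}',k)$ has index $k-1+f(h(\alpha,B))$.
\item The second summand $(\bm{k}\ast\bm{b},k')$ has index $k'-1+f(h(A,\beta))$.
\item The third summand $(\bm{a}\ast\bm{b},k+k')$ has index $k+k'-1+f(h(\alpha,\beta))$.
\end{enumerate}
Note that $f(h(x,y))=f(x)+f(y)$, since $h(x,y)<0$ exactly when one of $x,y$ is negative, and then it equals the sum of the negative parts. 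Using this, the three quantities become
\[
k-1+f(\alpha)+f(B)=A+f(B),\qquad B+f(A),\qquad A+B+1.
\]

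\emph{Final comparison.} It remains to show
\[
\min\{A+f(B),\,B+f(A),\,A+B+1\}=h(A,B).
\]
Split into cases on the signs of $A$ and $B$.
\begin{itemize}
\item If $A,B\ge0$, the first two entries are $A$ and $B$. The third, $A+B+1$, exceeds both, so the minimum is $\min\{A,B\}=h(A,B)$.
\item If $A<0\le B$, the entries are $A$, $B+A$ and $A+B+1$. Since $B\ge 0$, the minimum is $A=h(A,B)$.
\item If $A,B<0$, the entries are $A+B$, $A+B$ and $A+B+1$, so the minimum is $A+B=h(A,B)$.
\end{itemize}

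\emph{Expected obstacle.} The main difficulty is bookkeeping rather than conceptual: correctly applying the inductive hypothesis to the mixed terms $\bm{a}\ast\bm{k}'$ and $\bm{k}\ast\bm{b}$, where one factor keeps its last entry, and handling the $\infty$ convention when $\bm{a}$ or $\bm{b}$ is empty. The identity $f(h(x,y))=f(x)+f(y)$ is the key simplification, and I would verify it separately by cases before assembling the argument.
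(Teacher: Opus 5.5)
Your proof is correct and follows essentially the same route as the paper. Both argue by induction on $\mathrm{dep}(\bm{k})+\mathrm{dep}(\bm{k}')$, split $\bm{k}\ast\bm{k}'$ by the three-term recursion, and apply the induction hypothesis to each summand and Lemma \ref{lem m_k stuffle} to the appended last entry. Your packaging via $f(x)=\min\{x,0\}$ and the identity $f(h(x,y))=f(x)+f(y)$ organizes the minimum computation more cleanly than the paper's expanded twelve-term minimum, and your no-cancellation remark (all stuffle coefficients are positive integers) justifies a step the paper uses without comment.
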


\begin{proof}
  We prove the claim by induction on $n = \mathrm{dep}(\bm{k}) + \mathrm{dep}(\bm{k}')$. For $n = 0$, it is trivial. For $n = 1$,  it suffices to prove the case for $(k) \ast \emptyset$, that is,
  $$m_{(k) \ast \emptyset} = \min \{ m_{(k)}, m_{\emptyset}, m_{(k)}+m_{\emptyset}\},$$
  and it follows that $m_{(k)} = k-1$ and $m_{\emptyset} = \infty$. For $n = 2$,  it suffices to prove the two cases for
  $$(k, k') \ast \emptyset, \ (k) \ast (k').$$
  It is easy to check
  $$m_{(k, k') \ast \emptyset} = \min \{k'-1, k+k'-2\} = \min \{ m_{(k, k')}, m_{\emptyset}, m_{(k, k')}+m_{\emptyset}\},$$
  $$m_{(k) \ast (k')} = \min \{k-1, k'-1, k+k'-2\} = \min \{ m_{(k)}, m_{(k')}, m_{(k)}+m_{(k')}\}.$$
  
  Assume $m_{\bm{k} \ast \bm{k}'} = \min \{ m_{\bm{k}}, m_{\bm{k}'}, m_{\bm{k}}+m_{\bm{k}'}\}$ holds for two integer indices $\bm{k}, \bm{k}'$ which satisfy $\mathrm{dep}(\bm{k}) + \mathrm{dep}(\bm{k}') < n$. We prove it for $\mathrm{dep}(\bm{k}) + \mathrm{dep}(\bm{k}') = n$. We write $\bm{k}$ and $\bm{k}'$ as $(k_1, \ldots, k_r)$ and $(k'_1, \ldots, k'_{r'})$ respectively. By Definition \ref{defn non-positive stuffle}, we get
  \begin{align*}
    \bm{k} \ast \bm{k}' = & ((k_1, \ldots, k_{r-1}) \ast (k'_1, \ldots, k'_{r'}), k_r) \\
    & + ((k_1, \ldots, k_r) \ast (k'_1, \ldots, k'_{r'-1}), k'_{r'}) \\
    & + ((k_1, \ldots, k_{r-1}) \ast (k'_1, \ldots, k'_{r'-1}), k_r + k'_{r'}).
  \end{align*}
  By inductive hypothesis,
  \begin{align*}
    m_{(k_1, \ldots, k_{r-1}) \ast (k'_1, \ldots, k'_{r'})} = & \min \{ m_{(k_1, \ldots, k_{r-1})}, m_{(k'_1, \ldots, k'_{r'})}, m_{(k_1, \ldots, k_{r-1})}+m_{(k'_1, \ldots, k'_{r'})}\}, \\
    m_{(k_1, \ldots, k_r) \ast (k'_1, \ldots, k'_{r'-1})} = & \min \{ m_{(k_1, \ldots, k_r)}, m_{(k'_1, \ldots, k'_{r'-1})}, m_{(k_1, \ldots, k_r)}+m_{(k'_1, \ldots, k'_{r'-1})}\}, \\
    m_{(k_1, \ldots, k_{r-1}) \ast (k'_1, \ldots, k'_{r'-1})} = & \min \{ m_{(k_1, \ldots, k_{r-1})}, m_{(k'_1, \ldots, k'_{r'-1})}, m_{(k_1, \ldots, k_{r-1})}+m_{(k'_1, \ldots, k'_{r'-1})}\}.
  \end{align*}
  These imply
  \begin{align*}
    m_{\bm{k} \ast \bm{k}'} = & \min \{m_{(k_1, \ldots, k_{r-1}) \ast (k'_1, \ldots, k'_{r'})} + k_r - 1, k_r - 1, \\
    & m_{(k_1, \ldots, k_r) \ast (k'_1, \ldots, k'_{r'-1})} + k'_{r'} - 1, k'_{r'} - 1, \\
    & m_{(k_1, \ldots, k_{r-1}) \ast (k'_1, \ldots, k'_{r'-1})} + k_r + k'_{r'} - 1, k_r + k'_{r'} - 1\} \\
    = & \min \{m_{(k_1, \ldots, k_{r-1})} + k_r - 1, m_{(k'_1, \ldots, k'_{r'})} + k_r - 1, m_{(k_1, \ldots, k_{r-1})}+m_{(k'_1, \ldots, k'_{r'})} + k_r - 1, k_r - 1, \\
    & m_{(k_1, \ldots, k_r)} + k'_{r'} - 1, m_{(k'_1, \ldots, k'_{r'-1})} + k'_{r'} - 1, m_{(k_1, \ldots, k_r)}+m_{(k'_1, \ldots, k'_{r'-1})} + k'_{r'} - 1, k'_{r'} - 1, \\
    & m_{(k_1, \ldots, k_{r-1})} + k_r + k'_{r'} - 1, m_{(k'_1, \ldots, k'_{r'-1})} + k_r + k'_{r'} - 1, \\
    & m_{(k_1, \ldots, k_{r-1})}+m_{(k'_1, \ldots, k'_{r'-1})} + k_r + k'_{r'} - 1, k_r + k'_{r'} - 1\} \\
    = & \min \{m_{(k_1, \ldots, k_r)}, m_{(k'_1, \ldots, k'_{r'}, k_r)}, m_{(k_1, \ldots, k_r)}+m_{(k'_1, \ldots, k'_{r'})}, \\
    & m_{(k_1, \ldots, k_r, k'_{r'})}, m_{(k'_1, \ldots, k'_{r'})}, m_{(k_1, \ldots, k_r)}+m_{(k'_1, \ldots, k'_{r'})}, \\
    & m_{(k_1, \ldots, k_r)} + k'_{r'}, m_{(k'_1, \ldots, k'_{r'})} + k_r, m_{(k_1, \ldots, k_{r})}+m_{(k'_1, \ldots, k'_{r'})}+1\} \\
    = & \min \{m_{(k_1, \ldots, k_r)}, m_{(k'_1, \ldots, k'_{r'}, k_r)}, m_{(k_1, \ldots, k_r)}+m_{(k'_1, \ldots, k'_{r'})}, m_{(k_1, \ldots, k_r, k'_{r'})}, m_{(k'_1, \ldots, k'_{r'})}\} \\
    = & \min \{m_{(k_1, \ldots, k_r)}, m_{(k'_1, \ldots, k'_{r'})}, m_{(k_1, \ldots, k_r)}+m_{(k'_1, \ldots, k'_{r'})}\}.
  \end{align*}
  In the third-to-last equality, we used several elementary properties of the minimum, including
  $$\min\{a+c,b+c\}=\min\{a,b\}+c,$$
  $$\min\{\min\{a,c\},\min\{b,c\}\}=\min\{a,b,c\},$$
  $$\min\{a,c\}+\min\{b,d\}\leq \min\{a+b,c+d\}.$$
  In the penultimate equality, we note that
  $$m_{(k_1, \ldots, k_r, k'_{r'})} \leq m_{(k_1, \ldots, k_r)} + k'_{r'},$$
  $$m_{(k'_1, \ldots, k'_{r'}, k_r)} \leq m_{(k'_1, \ldots, k'_{r'})} + k_r.$$
  In the final equality, we observed that
  $$\min \{m_{(k_1, \ldots, k_r)}, m_{(k_1, \ldots, k_r)}+m_{(k'_1, \ldots, k'_{r'})}\} \leq m_{(k_1, \ldots, k_r, k'_{r'})},$$
  $$\min \{m_{(k'_1, \ldots, k'_{r'})}, m_{(k_1, \ldots, k_r)}+m_{(k'_1, \ldots, k'_{r'})}\} \leq m_{(k'_1, \ldots, k'_{r'}, k_r)}.$$
\end{proof}

\begin{cor}\label{cor ad stuffle}
  The space $\mathrm{span}_{\QQ}\{\Ical^{\mathrm{adm}}\}$ of admissible integer indices is a $\QQ$-subalgebra of $(\mathrm{span}_{\QQ}\{\Ical\}, \ast)$.
\end{cor}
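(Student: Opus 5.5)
The plan is to copy the proof of Corollary \ref{cor ad shuffle}, with Proposition \ref{prop main stuffle} in place of Proposition \ref{prop main shuffle}. Since $\mathrm{span}_{\QQ}\{\Ical^{\mathrm{adm}}\}$ is a linear subspace and $\ast$ is bilinear, closure only needs to be checked on basis elements. So it suffices to show: for admissible $\bm{k}, \bm{k}'$, every index in $\mathrm{Supp}(\bm{k} \ast \bm{k}')$ is admissible.

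For two admissible integer indices $\bm{k}, \bm{k}'$ we have $m_{\bm{k}} > 0$ and $m_{\bm{k}'} > 0$. Proposition \ref{prop main stuffle} then gives
$$m_{\bm{k} \ast \bm{k}'} = \min \{ m_{\bm{k}}, m_{\bm{k}'}, m_{\bm{k}}+m_{\bm{k}'}\} > 0.$$
By definition, $m_{\bm{k} \ast \bm{k}'}$ is the minimum of $m_{\bm{l}}$ over $\bm{l} \in \mathrm{Supp}(\bm{k} \ast \bm{k}')$. Hence each such $\bm{l}$ satisfies $m_{\bm{l}} > 0$, i.e.\ is admissible, and so $\bm{k} \ast \bm{k}' \in \mathrm{span}_{\QQ}\{\Ical^{\mathrm{adm}}\}$.

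Two degenerate cases need a word. If $\bm{k} = \emptyset$ or $\bm{k}' = \emptyset$, the product is the other factor by Definition \ref{defn non-positive stuffle}; this is consistent with $m_{\emptyset} = \infty$ being counted as admissible. The unit $\emptyset$ lies in $\Ical^{\mathrm{adm}}$ for the same reason, so the subspace contains the identity. Associativity and commutativity are inherited from the ambient algebra $(\mathrm{span}_{\QQ}\{\Ical\}, \ast)$.

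The only delicate point is cancellation: the minimum in $m_{\bm{k} \ast \bm{k}'}$ runs over the support after collecting coefficients, so some index could in principle cancel out. This does no harm, because cancellation only removes indices from the support and cannot create non-admissible ones. Indeed, every index arising in the recursive expansion of Definition \ref{defn non-positive stuffle} already has $m \geq \min\{m_{\bm{k}}, m_{\bm{k}'}, m_{\bm{k}}+m_{\bm{k}'}\}$, by the lower-bound half of the inductive argument in Proposition \ref{prop main stuffle}. So the positivity conclusion holds regardless, and no real obstacle remains.
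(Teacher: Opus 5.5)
Your proposal is correct and is the paper's proof: admissibility of $\bm{k},\bm{k}'$ together with Proposition \ref{prop main stuffle} gives $m_{\bm{k}\ast\bm{k}'}>0$, so every index in the support of $\bm{k}\ast\bm{k}'$ is admissible. Your worry about cancellation cannot arise here, because the recursion in Definition \ref{defn non-positive stuffle} only produces terms with positive integer coefficients.
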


\begin{proof}
  Let $\bm{k}, \bm{k}'$ be two admissible integer indices. By Proposition \ref{prop main stuffle}, we have
  $$m_{\bm{k} \ast \bm{k}'} = \min \{ m_{\bm{k}}, m_{\bm{k}'}, m_{\bm{k}}+m_{\bm{k}'}\} > 0.$$
  This shows that it is closed under the stuffle product, hence we have a subalgebra of $(\mathrm{span}_{\QQ}\{\Ical\}, \ast)$.
\end{proof}

\begin{cor}\label{cor re stuffle}
  The space $\mathrm{span}_{\QQ}\{\Ical^{\mathrm{reg}}\}$ of regularizable integer indices is a $\QQ$-subalgebra of $(\mathrm{span}_{\QQ}\{\Ical\}, \ast)$.
\end{cor}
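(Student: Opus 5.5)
The argument will mirror the proof of Corollary \ref{cor ad stuffle}, with the strict inequality replaced by a non-strict one. By bilinearity of $\ast$ and the definition of $m_\Sigma$, it is enough to treat basis elements. Take two regularizable integer indices $\bm{k},\bm{k}'$, so that $m_{\bm{k}}\geq 0$ and $m_{\bm{k}'}\geq 0$. I will show that every index in $\mathrm{Supp}(\bm{k}\ast\bm{k}')$ is regularizable.

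Proposition \ref{prop main stuffle} gives
$$m_{\bm{k}\ast\bm{k}'}=\min\{m_{\bm{k}},m_{\bm{k}'},m_{\bm{k}}+m_{\bm{k}'}\}.$$
Each of the three quantities on the right is a minimum or sum of non-negative numbers, with $\infty$ allowed when an index is $\emptyset$, so each is $\geq 0$. Hence $m_{\bm{k}\ast\bm{k}'}\geq 0$.

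By definition of $m_\Sigma$, this means every $\bm{l}\in\mathrm{Supp}(\bm{k}\ast\bm{k}')$ satisfies $m_{\bm{l}}\geq 0$, that is, $\bm{l}\in\Ical^{\mathrm{reg}}$. So $\bm{k}\ast\bm{k}'\in\mathrm{span}_{\QQ}\{\Ical^{\mathrm{reg}}\}$.

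Extending bilinearly shows that $\mathrm{span}_{\QQ}\{\Ical^{\mathrm{reg}}\}$ is closed under $\ast$. It also contains the unit $\emptyset$, since $m_\emptyset=\infty\geq 0$. Therefore it is a $\QQ$-subalgebra of $(\mathrm{span}_{\QQ}\{\Ical\},\ast)$.

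There is no real obstacle here. The only point needing care is that cancellation in the stuffle product could make the support smaller than the set of indices produced by the recursion. This does no harm, because Proposition \ref{prop main stuffle} is phrased via $m_\Sigma$ on the support, and passing to a subset can only increase the minimum.
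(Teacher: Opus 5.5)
Your proof is correct and is essentially the paper's own argument: for regularizable $\bm{k},\bm{k}'$, Proposition \ref{prop main stuffle} gives $m_{\bm{k}\ast\bm{k}'}=\min\{m_{\bm{k}},m_{\bm{k}'},m_{\bm{k}}+m_{\bm{k}'}\}\geq 0$, hence closure under $\ast$. Your extra worry about cancellation shrinking the support is harmless but unnecessary, because the stuffle recursion only produces terms with positive integer coefficients.
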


\begin{proof}
  Let $\bm{k}, \bm{k}'$ be two regularizable integer indices. By Proposition \ref{prop main stuffle}, we have
  $$m_{\bm{k} \ast \bm{k}'} = \min \{ m_{\bm{k}}, m_{\bm{k}'}, m_{\bm{k}}+m_{\bm{k}'}\} \geq 0.$$
  This shows that it is closed under the stuffle product, hence we have a subalgebra of $(\mathrm{span}_{\QQ}\{\Ical\}, \ast)$.
\end{proof}

\begin{thm}\label{thm stuffle-pi+}
  Let $\mathrm{span}_{\QQ}\{\Ical^{\mathrm{adm}}\} = (\mathrm{span}_{\QQ}\{\Ical^{\mathrm{adm}}\}, \ast)$, $\mathrm{span}_{\QQ}\{\Ical_{>0}^{\mathrm{adm}}\} = (\mathrm{span}_{\QQ}\{\Ical_{>0}^{\mathrm{adm}}\}, \ast)$ be the $\QQ$-algebra with stuffle product $\ast$. Then, the positive-index map $\pi^{+}$ is a $\QQ$-algebra homomorphism, that is, the following diagram is commutative.
  $$\begin{tikzcd}
    \mathrm{span}_{\QQ}\{\Ical^{\mathrm{adm}}\} \times \mathrm{span}_{\QQ}\{\Ical^{\mathrm{adm}}\} \arrow[r, "\pi^{+} \otimes \pi^{+}"] \arrow[d, "\ast"] & \mathrm{span}_{\QQ}\{\Ical_{>0}^{\mathrm{adm}}\} \times \mathrm{span}_{\QQ}\{\Ical_{>0}^{\mathrm{adm}}\} \arrow[d, "\ast"'] \\
    \mathrm{span}_{\QQ}\{\Ical^{\mathrm{adm}}\} \arrow[r, "\pi^{+}"]                & \mathrm{span}_{\QQ}\{\Ical_{>0}^{\mathrm{adm}}\}                
  \end{tikzcd}$$
\end{thm}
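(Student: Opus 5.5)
The plan is to exploit the analytic meaning of $\pi^{+}$ rather than reproduce a long combinatorial induction as in Theorem \ref{thm shuffle-pi+}. The key observation is that the stuffle product is exactly the product of truncated harmonic sums. For $\bm{k} = (k_1,\ldots,k_r) \in \ZZ^r$ and $N \geq 1$, put
$$S_N(\bm{k}) = \sum_{0<n_1<\cdots<n_r<N} \frac{1}{n_1^{k_1}\cdots n_r^{k_r}},$$
with $S_N(\emptyset)=1$, and extend $\QQ$-linearly. Then Definition \ref{defn non-positive stuffle} gives
$$S_N(\bm{k}\ast\bm{k}') = S_N(\bm{k})S_N(\bm{k}')$$
for all integer indices. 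This is the usual splitting of $n_r<N$, $n'_{r'}<N$ according to whether $n_r<n'_{r'}$, $n_r>n'_{r'}$ or $n_r=n'_{r'}$, and negative exponents cause no trouble since the sums are finite. On the other hand, $\Li_{\bm{k}}(z) = \sum_{N} \bigl(S_N(\bm{k}_{\text{without }k_r})\bigr) N^{-k_r} z^N$, so it is more convenient to work with the harmonic sums directly.

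\textbf{Step 1.} I will check that the Faulhaber computation in the proof of Proposition \ref{prop wt-dep>0} is an identity of truncated sums, not merely of power series. Namely, for every $\bm{k} \in \Ical$ and every $N$,
$$S_N(\bm{k}) = S_N(\pi^{+}(\bm{k}))$$
with $\pi^{+}$ as extended in Remark \ref{rem pi+}. The elimination of the first nonpositive entry $k_m$ only uses Lemma \ref{lem Faulhaber's formula} on the innermost range $n_{m-1}<n<n_{m+1}$. When $k_m$ is the last entry, the upper bound becomes $N$, and the same formula applies with $n_{m+1}$ replaced by $N$. That case produces an extra power of $N$, which corresponds to a trailing index entry acting as $N^{j}$. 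To keep this clean, I would instead state the identity as
$$S_{N+1}(\bm{k}',k) = \sum_{n\le N} S_n(\bm{k}')\,n^{-k},$$
so that the $\Li$-identity \eqref{eq pi+}, coefficientwise in $z^N$, says precisely that $\sum_{n=N}S_n(\bm{k}_{\le r-1})N^{-k_r}$ agrees with the corresponding expression for $\pi^{+}(\bm{k})$. Equivalently, $S_{N+1}(\bm{k})=S_{N+1}(\pi^{+}(\bm{k}))$ for all $N$. This follows directly from \eqref{eq pi+}, since the coefficient of $z^N$ in $\Li_{\bm{k}}$ is $S_{N+1}(\bm{k})-S_N(\bm{k})$; telescoping then gives the identity with no further work.

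\textbf{Step 2.} I will show that $\bm{l}\mapsto (S_N(\bm{l}))_{N\ge1}$ is injective on $\mathrm{span}_{\QQ}\{\Ical_{>0}\}$. Equivalently, the functions $\Li_{\bm{l}}(z)$ for $\bm{l} \in \Ical_{>0}$ are $\QQ$-linearly independent power series. This is standard: for instance, differentiate via the Remark after Definition \ref{defn MPL}, and use the linear independence of iterated integrals of $dz/z$ and $dz/(1-z)$, or the linear independence of harmonic sums of positive indices as functions of $N$. I expect this to be the main obstacle, since the paper has not stated it. I would prove it by induction on weight, looking at the asymptotic or difference behaviour: $S_{N+1}-S_N$ determines the last entry $k_r$ through the power $N^{-k_r}$ times a sum of lower depth. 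Care is needed because sums of positive indices are not polynomials in $N$.

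\textbf{Step 3.} The conclusion then follows. For admissible $\bm{k},\bm{k}'$, Corollary \ref{cor ad stuffle} places both $\pi^{+}(\bm{k}\ast\bm{k}')$ and $\pi^{+}(\bm{k})\ast\pi^{+}(\bm{k}')$ in $\mathrm{span}_{\QQ}\{\Ical_{>0}^{\mathrm{adm}}\}$; here one uses that the stuffle of positive indices stays positive. By Steps 1 and the multiplicativity of $S_N$, both have harmonic sums equal to
$$S_N(\bm{k})S_N(\bm{k}').$$
Step 2 then forces the two elements to be equal, which proves that $\pi^{+}$ commutes with $\ast$. Unitality, $\pi^{+}(\emptyset)=\emptyset$, is immediate.
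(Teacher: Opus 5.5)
Your proof is correct in outline, and it takes a genuinely different route from the paper.

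\textbf{Comparison with the paper.} The paper argues entirely on the formal side. It inducts on $\mathrm{dep}(\bm{k})+\mathrm{dep}(\bm{k}')$, unfolds the recursive definition of $\ast$, and pushes $\pi^{+}$ through the three resulting terms using $\pi^{+}(\bm{k},k)=\pi^{+}(\pi^{+}(\bm{k}),k)$ and the inductive hypothesis. You instead pass to a realization with two properties:
\begin{enumerate}[(a)]
\item $\ast$ becomes ordinary multiplication, since $S_N(\bm{k}\ast\bm{k}')=S_N(\bm{k})S_N(\bm{k}')$ holds for all integer indices (the sums are finite);
\item $\pi^{+}$ becomes invisible. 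This is your Step 1, and it does follow from \eqref{eq pi+} alone, because $S_N(\bm{k})$ is the coefficient of $z^{N-1}$ in $\Li_{\bm{k}}(z)/(1-z)$.
\end{enumerate}
The homomorphism property then reduces to faithfulness of this realization on $\mathrm{span}_{\QQ}\{\Ical_{>0}\}$.

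What your route buys:
\begin{enumerate}[(i)]
\item It avoids the delicate bookkeeping in the paper's induction. That induction applies $\pi^{+}$ and the inductive hypothesis to pieces such as $\bm{k}'$ or $(\bm{k},k)\ast\bm{k}'$, which need not be admissible, so it really runs on the extended map of Remark \ref{rem pi+}.
\item It shows that $\pi^{+}(\bm{k})$ is the unique element of $\mathrm{span}_{\QQ}\{\Ical_{>0}\}$ with the same $\Li$. Hence $\pi^{+}(\bm{k})$ does not depend on the elimination order used in Proposition \ref{prop wt-dep>0}.
\item It gives the regularizable version verbatim.
\end{enumerate}
The price is an external input that the paper's combinatorial proof does not need: linear independence of the $\Li_{\bm{l}}$ for $\bm{l}\in\Ical_{>0}$ (with $\Li_{\emptyset}=1$).

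\textbf{On Step 2.} That input is a standard fact, and citing it is fine. If you prove it yourself, however, the induction you sketch does not close over $\QQ$. Differencing $\sum c_{\bm{l}}S_N(\bm{l})=0$ gives $\sum_k N^{-k}F_k(N)=0$, where each $F_k$ is a $\QQ$-combination of lower-depth sums. Concluding $F_k=0$ already requires independence of the lower-depth sums over $\QQ(N)$. In the power-series picture, differentiation produces the coefficients $1/z$ and $1/(1-z)$, so you would need independence over $\QQ(z)$.

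So induct on the stronger statement, allowing rational coefficients:
\begin{enumerate}[(1)]
\item Take a minimal relation and normalize one top-depth coefficient to $1$.
\item Difference the relation. The result has fewer top-depth terms, so by minimality it is trivial; hence the top-depth coefficients are constants.
\item In that trivial relation, the coefficient of each depth-$(d-1)$ sum $S_N(\bm{m})$ gives $R_{\bm{m}}(N+1)-R_{\bm{m}}(N)=-\sum_{k\geq 1}c_{(\bm{m},k)}N^{-k}$ with $R_{\bm{m}}$ rational.
\item Comparing poles on $\ZZ$ forces all $c_{(\bm{m},k)}=0$, contradicting the normalization. In the power-series version, the analogous equation $R'=a/z+b/(1-z)$ forces $a=b=0$ by residues.
\end{enumerate}

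\textbf{On Step 1.} You only need it for admissible indices: $\bm{k}$, $\bm{k}'$, and $\mathrm{Supp}(\bm{k}\ast\bm{k}')$, which is admissible by Corollary \ref{cor ad stuffle}. For admissible indices $k_r\geq 2$, so the first nonpositive entry is never the last one. Your discussion of a trailing nonpositive entry can therefore be dropped.
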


\begin{proof}
  Let $\bm{k}, \bm{k}'$ be two admissible integer indices. We prove the claim by induction on $n = \mathrm{dep}(\bm{k}) + \mathrm{dep}(\bm{k}')$. For $n = 0$, it is trivial. For $n = 1$,  it suffices to prove the case for $(k) \ast \emptyset$, that is,
  $$\pi^{+}(k) \ast \pi^{+}(\emptyset) = \pi^{+}(k) = \pi^{+}((k) \ast \emptyset)$$
  and it follows that $\pi^{+}(k) = (k)$ and $\pi^{+}(\emptyset) = \emptyset$. For $n = 2$,  it suffices to prove the two cases for
  $$(k, k') \ast \emptyset, \ (k) \ast (k').$$
  It is easy to check
  $$\pi^{+}(k, k') \ast \pi^{+}(\emptyset) = \pi^{+}(k, k') = \pi^{+}((k, k') \ast \emptyset),$$
  $$\pi^{+}(k) \ast \pi^{+}(k') = (k) \ast (k') = \pi^{+}((k) \ast (k')).$$
  Assume $\ast \circ (\pi^{+} \otimes \pi^{+}) =  \pi^{+} \circ \ast$ holds for two admissible integer indices $(\bm{k}, k), (\bm{k}', k')$ which satisfy $\mathrm{dep}(\bm{k}, k) + \mathrm{dep}(\bm{k}', k') < n$. We prove it for $\mathrm{dep}(\bm{k}, k) + \mathrm{dep}(\bm{k}', k') = n$ using Definition \ref{defn non-positive stuffle}. By Definition \ref{defn pi+} and Remark \ref{rem pi+}, it is easy to see $\pi^{+}(\bm{k}, k) = \pi^{+}(\pi^{+}(\bm{k}), k)$. By inductive hypothesis, we obtain
  \begin{align*}
    & \ast \circ (\pi^{+} \otimes \pi^{+})((\bm{k}, k) \ast (\bm{k}', k')) \\
    = & \pi^{+}(\bm{k}, k) \ast \pi^{+}(\bm{k}', k') \\
    = & \pi^{+}(\pi^{+}(\bm{k}, k) \ast \pi^{+}(\bm{k}', k')) \\
    = & \pi^{+}(\pi^{+}(\pi^{+}(\bm{k}), k) \ast \pi^{+}(\pi^{+}(\bm{k}'), k')) \\
    = & \pi^{+}((\pi^{+}(\pi^{+}(\bm{k}), k) \ast \pi^{+}(\bm{k}'), k') + (\pi^{+}(\bm{k}) \ast \pi^{+}(\pi^{+}(\bm{k}'), k'), k) + (\pi^{+}(\bm{k}) \ast \pi^{+}(\bm{k}'), k + k')) \\
    = & \pi^{+}((\pi^{+}(\bm{k}, k) \ast \pi^{+}(\bm{k}'), k') + (\pi^{+}(\bm{k}) \ast \pi^{+}(\bm{k}', k'), k) + (\pi^{+}(\bm{k}) \ast \pi^{+}(\bm{k}'), k + k')) \\
    = & \pi^{+}((\pi^{+}((\bm{k}, k) \ast (\bm{k}')), k') + (\pi^{+}(\bm{k} \ast (\bm{k}', k')), k) + (\pi^{+}(\bm{k} \ast \bm{k}'), k + k')) \\
    = & \pi^{+}(\pi^{+}((\bm{k}, k) \ast (\bm{k}')), k') + \pi^{+}(\pi^{+}(\bm{k} \ast (\bm{k}', k')), k) + \pi^{+}(\pi^{+}(\bm{k} \ast \bm{k}'), k + k') \\
    = & \pi^{+}((\bm{k}, k) \ast (\bm{k}'), k') + \pi^{+}(\bm{k} \ast (\bm{k}', k'), k) + \pi^{+}(\bm{k} \ast \bm{k}', k + k') \\
    = & \pi^{+}(((\bm{k}, k) \ast (\bm{k}'), k') + (\bm{k} \ast (\bm{k}', k'), k) + (\bm{k} \ast \bm{k}', k + k')) \\
    = & \pi^{+}((\bm{k}, k) \ast (\bm{k}', k')).
  \end{align*}
\end{proof}

\begin{rem}
  By the same argument of the proof of Theorem \ref{thm stuffle-pi+}, we obtain the positive-index map $\pi^{+}$ is also a $\QQ$-algebra homomorphism between the algebra of regularizable integer index, that is, the following diagram is commutative.
  $$\begin{tikzcd}
    \mathrm{span}_{\QQ}\{\Ical^{\mathrm{reg}}\} \times \mathrm{span}_{\QQ}\{\Ical^{\mathrm{reg}}\} \arrow[r, "\pi^{+} \otimes \pi^{+}"] \arrow[d, "\ast"] & \mathrm{span}_{\QQ}\{\Ical_{>0}^{\mathrm{reg}}\} \times \mathrm{span}_{\QQ}\{\Ical_{>0}^{\mathrm{reg}}\} \arrow[d, "\ast"'] \\
    \mathrm{span}_{\QQ}\{\Ical^{\mathrm{reg}}\} \arrow[r, "\pi^{+}"]                & \mathrm{span}_{\QQ}\{\Ical_{>0}^{\mathrm{reg}}\}                
  \end{tikzcd}$$
\end{rem}

\section{Applications}\label{section applications}
In this section, we recall the definition of $p$-adic multiple zeta values and extend it to the class of admissible integer indices (Definition \ref{defn p-adic MZV integer index}). We further show that both the shuffle and stuffle products extend naturally to this setting; hence the double shuffle relation holds for $p$-adic multiple zeta values of admissible integer indices (Theorem \ref{thm p-adic MZV integer index DSR}).

\begin{defn}[\cite{p-adicMZV}]\label{defn p-adic MPL}
  Fix a prime $p$. For an admissible positive integer index $\bm{k}=(k_1,\ldots,k_r) \in \ZZ_{>0}^r$, the $p${\it -adic (single variable) multiple polylogarithms} are defined by the following series
  $$\Li^p_{k_1,\ldots,k_r}(z) = \sum_{0<n_1<\cdots<n_r} \frac{z^{n_r}}{n_1^{k_1}\cdots n_r^{k_r}} \in \QQ_p\llbracket z \rrbracket.$$
\end{defn}

The following remark follows from Definition \ref{defn pi+}.

\begin{rem}\label{rem p-adic MPL integer indices}
  Let $\bm{k}=(k_1,\ldots,k_r) \in \ZZ^r$ be an admissible integer index. Let $\pi^{+}$ be the positive-index map in Definition \ref{defn pi+}. Then by equation \eqref{eq pi+},
  \begin{equation}\label{eq p-adic Li}
    \Li^p_{\bm{k}}(z) = \sum_{\bm{l} \in \mathrm{Supp}(\pi^{+}(\bm{k}))}c_{\bm{k}, \bm{l}}\Li^p_{\bm{l}}(z)
  \end{equation}
  holds in $\QQ_p\llbracket z \rrbracket$ for the coefficient $c_{\bm{k}, \bm{l}}\in \QQ \setminus \{0\}$ in Definition \ref{defn pi+}.
\end{rem}

The $p$-adic multiple polylogarithms $\Li^p_{\bm{k}}(z)$ converge on $\left\{ z \in \CC_p \middle| |z|_p < 1 \right\}$. By Coleman’s theory of $p$-adic iterated integration, one can regard it as a Coleman function
\begin{equation}\label{eq Coleman iterated integral}
  \Li^p_{k_1,\ldots,k_r}(z) = \int_{0}^{z}\omega_1 \omega_0^{k_r-1}\cdots \omega_1 \omega_0^{k_1-1}
\end{equation}
on $X \coloneqq \PP^1\setminus\{0,1,\infty\}$, where $\omega_1 = \frac{dt}{1-t}, \omega_0 = \frac{dt}{t}$, which allows us to locally analytically extends $\Li^p_{k_1,\ldots,k_r}(z)$ to $X(\CC_p)$ (cf. \cite[Definition~2.9]{p-adicMZV}). In a neighborhood of $z = 1 - t = 1$, $\Li^p_{k_1,\ldots,k_r}(1-t)$ admits an expansion as a $\QQ_p\llbracket t \rrbracket$-coefficient polynomial in $\log^{\alpha}(t)$ on $0<|t|_p<\varepsilon$, where $\alpha \in \CC_p$ is a branch parameter. A specific limit $\lim_{z\to 1}\!'\Li^p_{k_1,\ldots,k_r}(z)$ is concerned in \cite[Notation~2.12]{p-adicMZV}. It is shown that it converges for admissible integer indices and independent of any choice of branch parameter $\alpha$ by \cite[Theorem~2.13, Theorem~2.18]{p-adicMZV}. Furusho \cite{p-adicMZV} considers this limit to define $p$-adic multiple zeta values as follows:

\begin{defn}[{\cite[Definition~2.17, Theorem~2.25]{p-adicMZV}}]\label{defn p-adic MZV}
  Let $\bm{k}=(k_1,\ldots,k_r)\in \ZZ_{>0}^r$ be an admissible positive integer index. The $p${\it -adic multiple zeta values} are defined by the following specific limit
  $$\zeta_p(k_1,\ldots,k_r) \coloneqq \lim_{z\to 1}\!'\Li^p_{k_1,\ldots,k_r}(z) \in \QQ_p.$$
\end{defn}

By using Remark \ref{rem p-adic MPL integer indices} and Definition \ref{defn p-adic MZV}, we define the $p$-adic multiple zeta values of integer indices as follows:

\begin{defn}\label{defn p-adic MZV integer index}
  Let $\bm{k}=(k_1,\ldots,k_r)\in \ZZ^r$ be an admissible integer index. We define the $p${\it -adic multiple zeta value} $\zeta_p(k_1,\ldots,k_r)$ for an integer index by
  $$\zeta_p(k_1,\ldots,k_r) \coloneqq \lim_{z\to 1}\!'\Li^p_{\bm{k}}(z) = \sum_{\bm{l} \in \mathrm{Supp}(\pi^{+}(\bm{k}))}c_{\bm{k}, \bm{l}}\lim_{z\to 1}\!'\Li^p_{\bm{l}}(z) = \sum_{\bm{l} \in \mathrm{Supp}(\pi^{+}(\bm{k}))}c_{\bm{k}, \bm{l}}\zeta_p(\bm{l}) \in \QQ_p,$$
  where the second equal is due to equation \eqref{eq p-adic Li}.
\end{defn}

We consider the $\QQ$-linear map
$$\zeta_p : \mathrm{span}_{\QQ}\{\Ical^{\mathrm{adm}}\} \to \QQ_p$$
defined by $\bm{k} \mapsto \zeta_p(\bm{k})$.

\begin{rem}\label{rem p-adic double shuffle admissible positive}
  The $p$-adic multiple zeta values for admissible positive integer indices satisfy the double shuffle relation, that is, the $\QQ$-linear map $\zeta_p$ is the $\QQ$-algebra homomorphism with respect to the shuffle product $\shuffle$
  \begin{equation}\label{eq p-adic shuffle admissible positive}
    \zeta_p : (\mathrm{span}_{\QQ}\{\Ical_{>0}^{\mathrm{adm}}\}, \shuffle) \to \QQ_p
  \end{equation}
  and is the $\QQ$-algebra homomorphism with respect to the stuffle product $\ast$
  \begin{equation}\label{eq p-adic stuffle admissible positive}
    \zeta_p : (\mathrm{span}_{\QQ}\{\Ical_{>0}^{\mathrm{adm}}\}, \ast) \to \QQ_p
  \end{equation}
  by \cite{p-adicDS, p-adicRDS}.
\end{rem}

\begin{prop}\label{prop p-adic MZV integer index shuffle}
  Let $(\mathrm{span}_{\QQ}\{\Ical^{\mathrm{adm}}\}, \shuffle)$ be the $\QQ$-algebra of admissible integer indices. Then, the linear map
  $$\zeta_p : \mathrm{span}_{\QQ}\{\Ical^{\mathrm{adm}}\} \to \QQ_p$$
  is a $\QQ$-algebra homomorphism, i.e., $\zeta_p(\bm{k} \shuffle \bm{k}') = \zeta_p(\bm{k})\zeta_p(\bm{k}')$ its restriction to admissible positive integer indices agree with \eqref{eq p-adic shuffle admissible positive}.
\end{prop}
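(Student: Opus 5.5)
The plan is to reduce the statement to the positive-index case through the factorization $\zeta_p = \zeta_p|_{\mathrm{span}_{\QQ}\{\Ical_{>0}^{\mathrm{adm}}\}} \circ \pi^{+}$ on $\mathrm{span}_{\QQ}\{\Ical^{\mathrm{adm}}\}$. Then I apply Theorem \ref{thm shuffle-pi+} together with Remark \ref{rem p-adic double shuffle admissible positive}.

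First, I record the factorization. By Definition \ref{defn p-adic MZV integer index}, for an admissible integer index $\bm{k}$ we have $\zeta_p(\bm{k}) = \sum_{\bm{l}} c_{\bm{k},\bm{l}}\zeta_p(\bm{l}) = \zeta_p(\pi^{+}(\bm{k}))$. Here the right-hand side is the $\QQ$-linear extension of the positive-index $\zeta_p$ of Definition \ref{defn p-adic MZV}. Extending linearly, $\zeta_p = \zeta_p\circ\pi^{+}$ on all of $\mathrm{span}_{\QQ}\{\Ical^{\mathrm{adm}}\}$. When $\bm{k}$ is already an admissible positive integer index, the inductive procedure in Proposition \ref{prop wt-dep>0} stops immediately, so $\pi^{+}(\bm{k})=\bm{k}$. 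Hence the new $\zeta_p$ restricts to Furusho's $\zeta_p$, which gives the compatibility with \eqref{eq p-adic shuffle admissible positive}. I should also check that $\zeta_p$ is well defined on the span: it is defined on basis elements and extended linearly, and the limit $\lim'_{z\to1}$ is linear on the finite combination \eqref{eq p-adic Li}, so this is fine.

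Next, for admissible $\bm{k},\bm{k}'$, Corollary \ref{cor ad shuffle} ensures that $\bm{k}\shuffle\bm{k}'$ lies in $\mathrm{span}_{\QQ}\{\Ical^{\mathrm{adm}}\}$, so $\zeta_p(\bm{k}\shuffle\bm{k}')$ makes sense. The computation then runs as
\begin{align*}
\zeta_p(\bm{k}\shuffle\bm{k}') &= \zeta_p(\pi^{+}(\bm{k}\shuffle\bm{k}')) = \zeta_p(\pi^{+}(\bm{k})\shuffle\pi^{+}(\bm{k}')) \\
&= \zeta_p(\pi^{+}(\bm{k}))\,\zeta_p(\pi^{+}(\bm{k}')) = \zeta_p(\bm{k})\,\zeta_p(\bm{k}').
\end{align*}
The second equality is Theorem \ref{thm shuffle-pi+}. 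The third uses that $\pi^{+}(\bm{k}),\pi^{+}(\bm{k}')\in\mathrm{span}_{\QQ}\{\Ical_{>0}^{\mathrm{adm}}\}$, together with \eqref{eq p-adic shuffle admissible positive} extended bilinearly. Finally, the unit $\emptyset$ maps to $1$, so $\zeta_p$ is a unital $\QQ$-algebra homomorphism.

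The main obstacle is making sure that the shuffle product on $\mathrm{span}_{\QQ}\{\Ical_{>0}^{\mathrm{adm}}\}$ obtained from the Ebrahimi-Fard--Manchon--Singer algebra $B$ via $\phi$ coincides with the classical shuffle used by Besser--Furusho in \eqref{eq p-adic shuffle admissible positive}. For words in $j$ and $y$ only, rules (i) and (ii) of Definition \ref{defn non-positive shuffle} are exactly the classical recursive shuffle on words in $\omega_0\leftrightarrow j$ and $\omega_1\leftrightarrow y$. The index convention $w_{\bm{k}}=j^{k_r}y\cdots j^{k_1}y$ matches the iterated integral \eqref{eq Coleman iterated integral} up to the harmless shift $j^{k}y\leftrightarrow\omega_0^{k-1}\omega_1$ read in reverse. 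I would verify this identification explicitly on generators, which is routine because no $d$ appears for positive indices. With that identification in hand, the argument above is complete.
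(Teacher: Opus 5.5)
Your argument is correct and is the paper's own: factor $\zeta_p=\zeta_p|_{\mathrm{span}_{\QQ}\{\Ical_{>0}^{\mathrm{adm}}\}}\circ\pi^{+}$, then compose the shuffle homomorphism $\pi^{+}$ of Theorem \ref{thm shuffle-pi+} with the Besser--Furusho homomorphism of Remark \ref{rem p-adic double shuffle admissible positive}; the paper does this in one line and never discusses the identification of the two shuffles that you flag. One small correction to that side check: the letter dictionary should be $j\leftrightarrow\omega_0$, $jy\leftrightarrow\omega_1$, not $y\leftrightarrow\omega_1$, because rule (i), $yu\shuffle v=y(u\shuffle v)$, is not the classical two-term recursion; with $jy\leftrightarrow\omega_1$, rules (i)--(ii) do reproduce the classical shuffle recursion, consistent with your shift $j^{k}y\leftrightarrow\omega_0^{k-1}\omega_1$.
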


\begin{proof}
  By Theorem \ref{thm shuffle-pi+} and Remark \ref{rem p-adic double shuffle admissible positive}, the composition of algebra homomorphism is still an algebra homomorphism, which proves the proposition.
\end{proof}

\begin{prop}\label{prop p-adic MZV integer index stuffle}
  Let $(\mathrm{span}_{\QQ}\{\Ical^{\mathrm{adm}}\}, \ast)$ be the $\QQ$-algebra of admissible integer indices. Then, the linear map
  $$\zeta_p : \mathrm{span}_{\QQ}\{\Ical^{\mathrm{adm}}\} \to \QQ_p$$
  is a $\QQ$-algebra homomorphism, i.e., $\zeta_p(\bm{k} \ast \bm{k}') = \zeta_p(\bm{k})\zeta_p(\bm{k}')$ its restriction to admissible positive integer indices agree with \eqref{eq p-adic stuffle admissible positive}.
\end{prop}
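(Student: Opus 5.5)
The plan mirrors the proof of Proposition \ref{prop p-adic MZV integer index shuffle}, with Theorem \ref{thm stuffle-pi+} in place of Theorem \ref{thm shuffle-pi+}. The first step is to record that, by Definition \ref{defn p-adic MZV integer index} and $\QQ$-linearity, the map $\zeta_p$ on $\mathrm{span}_{\QQ}\{\Ical^{\mathrm{adm}}\}$ factors as
$$\zeta_p = \zeta_p|_{\mathrm{span}_{\QQ}\{\Ical_{>0}^{\mathrm{adm}}\}} \circ \pi^{+}.$$
Indeed, for an admissible $\bm{k}$ we have $\zeta_p(\bm{k}) = \sum_{\bm{l}} c_{\bm{k},\bm{l}}\zeta_p(\bm{l}) = \zeta_p(\pi^{+}(\bm{k}))$, and both sides extend linearly. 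Since $\pi^{+}$ is the identity on $\mathrm{span}_{\QQ}\{\Ical_{>0}^{\mathrm{adm}}\}$ (the inductive procedure of Proposition \ref{prop wt-dep>0} stops immediately on positive indices), this factorization also shows that the restriction of $\zeta_p$ to admissible positive indices is the map \eqref{eq p-adic stuffle admissible positive}. This settles the compatibility part of the statement.

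Next, fix $\bm{k}, \bm{k}' \in \Ical^{\mathrm{adm}}$. By Corollary \ref{cor ad stuffle}, $\bm{k} \ast \bm{k}'$ lies in $\mathrm{span}_{\QQ}\{\Ical^{\mathrm{adm}}\}$, so $\zeta_p(\bm{k}\ast\bm{k}')$ is defined. The computation then runs as follows:
$$\zeta_p(\bm{k}\ast\bm{k}') = \zeta_p(\pi^{+}(\bm{k}\ast\bm{k}')) = \zeta_p(\pi^{+}(\bm{k}) \ast \pi^{+}(\bm{k}')) = \zeta_p(\pi^{+}(\bm{k}))\,\zeta_p(\pi^{+}(\bm{k}')) = \zeta_p(\bm{k})\zeta_p(\bm{k}').$$
The second equality is Theorem \ref{thm stuffle-pi+}. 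The third is the stuffle relation \eqref{eq p-adic stuffle admissible positive} for admissible positive indices from \cite{p-adicDS, p-adicRDS}, extended bilinearly to the elements $\pi^{+}(\bm{k}), \pi^{+}(\bm{k}') \in \mathrm{span}_{\QQ}\{\Ical_{>0}^{\mathrm{adm}}\}$. Bilinearity of $\ast$ then extends the identity from basis elements to the whole space. For unitality, note that $\pi^{+}(\emptyset)=\emptyset$ and $\zeta_p(\emptyset)=1$.

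The only point that needs care is well-definedness: the middle expression requires $\pi^{+}(\bm{k})\ast\pi^{+}(\bm{k}')$ to stay inside the admissible positive span, where \eqref{eq p-adic stuffle admissible positive} applies. This holds by Corollary \ref{cor ad stuffle} together with the fact that the stuffle product of positive indices has only positive components. Everything else is formal, since the substantive input is already contained in Theorem \ref{thm stuffle-pi+}.
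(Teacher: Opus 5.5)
Your proposal is correct and is essentially the paper's proof: the paper observes that $\zeta_p$ on $\mathrm{span}_{\QQ}\{\Ical^{\mathrm{adm}}\}$ is the composition of the stuffle homomorphism $\pi^{+}$ from Theorem \ref{thm stuffle-pi+} with the stuffle homomorphism \eqref{eq p-adic stuffle admissible positive} for admissible positive indices (Besser--Furusho, Furusho--Jafari). You additionally spell out the factorization $\zeta_p=\zeta_p|_{\mathrm{span}_{\QQ}\{\Ical_{>0}^{\mathrm{adm}}\}}\circ\pi^{+}$, the closure of the admissible positive span under $\ast$, and the unit check, which the paper leaves implicit.
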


\begin{proof}
  By Theorem \ref{thm stuffle-pi+} and Remark \ref{rem p-adic double shuffle admissible positive}, the composition of algebra homomorphism is still an algebra homomorphism, which proves the proposition.
\end{proof}

\begin{thm}\label{thm p-adic MZV integer index DSR}
  Let $\bm{k}, \bm{k}'$ be admissible integer indices. The $p$-adic multiple zeta values at integer indices satisfy the double shuffle relation, that is,
  $$\zeta_p(\bm{k} \shuffle \bm{k}') = \zeta_p(\bm{k})\zeta_p(\bm{k}') = \zeta_p(\bm{k} \ast \bm{k}').$$
\end{thm}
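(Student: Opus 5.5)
The theorem combines Propositions \ref{prop p-adic MZV integer index shuffle} and \ref{prop p-adic MZV integer index stuffle}, so the plan is to chain the two algebra-homomorphism statements through the common value $\zeta_p(\bm{k})\zeta_p(\bm{k}')$.

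\textbf{Step 1: well-definedness.} Let $\bm{k}, \bm{k}'$ be admissible integer indices. By Corollaries \ref{cor ad shuffle} and \ref{cor ad stuffle}, both $\bm{k} \shuffle \bm{k}'$ and $\bm{k} \ast \bm{k}'$ lie in $\mathrm{span}_{\QQ}\{\Ical^{\mathrm{adm}}\}$. Hence applying the linear map $\zeta_p$ of Definition \ref{defn p-adic MZV integer index} to each of them makes sense.

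\textbf{Step 2: factoring $\zeta_p$ through $\pi^{+}$.} On $\mathrm{span}_{\QQ}\{\Ical^{\mathrm{adm}}\}$, Definition \ref{defn p-adic MZV integer index} together with equation \eqref{eq p-adic Li} gives
$$\zeta_p = \zeta_p|_{\mathrm{span}_{\QQ}\{\Ical_{>0}^{\mathrm{adm}}\}} \circ \pi^{+}.$$
I would check that this identity holds linearly and not only on basis elements. This needs $\pi^{+}$ to be the identity on admissible positive indices, which holds by Definition \ref{defn pi+}, since the inductive procedure does nothing when no component is $\le 0$.

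\textbf{Step 3: the shuffle side.} Using Theorem \ref{thm shuffle-pi+} and then the shuffle homomorphism \eqref{eq p-adic shuffle admissible positive} from Remark \ref{rem p-adic double shuffle admissible positive}, we get
$$\zeta_p(\bm{k} \shuffle \bm{k}') = \zeta_p(\pi^{+}(\bm{k}) \shuffle \pi^{+}(\bm{k}')) = \zeta_p(\pi^{+}(\bm{k}))\,\zeta_p(\pi^{+}(\bm{k}')) = \zeta_p(\bm{k})\,\zeta_p(\bm{k}').$$
In the middle step the product is expanded bilinearly over the supports of $\pi^{+}(\bm{k})$ and $\pi^{+}(\bm{k}')$.

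\textbf{Step 4: the stuffle side.} The same argument, with Theorem \ref{thm stuffle-pi+} and \eqref{eq p-adic stuffle admissible positive}, gives $\zeta_p(\bm{k} \ast \bm{k}') = \zeta_p(\bm{k})\zeta_p(\bm{k}')$. Steps 3 and 4 are exactly Propositions \ref{prop p-adic MZV integer index shuffle} and \ref{prop p-adic MZV integer index stuffle}, so one can simply cite them and conclude by equating the two expressions.

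\textbf{Expected obstacle.} The only delicate point is compatibility of the two product structures on positive indices. The shuffle product on $\mathrm{span}_{\QQ}\{\Ical_{>0}^{\mathrm{adm}}\}$ used in Theorem \ref{thm shuffle-pi+} is the one induced from the Ebrahimi-Fard--Manchon--Singer algebra $B$ via $\phi$. I need it to coincide, on positive indices, with the classical shuffle for which Besser--Furusho proved \eqref{eq p-adic shuffle admissible positive}.

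For words built only from $j$ and $y$, relations (i) and (ii) of Definition \ref{defn non-positive shuffle} are exactly the classical recursive shuffle rules for the letters $x = j$ and $y$. I would state this identification explicitly. I would also check that no $d$-relations or quotient relations enter when both inputs are positive words. This holds because recursion (i)–(ii) never produces $d$, and the quotient by $\Tcal + \Lcal$ only discards words not ending in $y$; no such words arise here.

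The stuffle product of Definition \ref{defn non-positive stuffle}, restricted to positive indices, is literally the classical harmonic product, so that side needs no extra check.
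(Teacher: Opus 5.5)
Your argument is the paper's own. The theorem is obtained by combining Propositions~\ref{prop p-adic MZV integer index shuffle} and~\ref{prop p-adic MZV integer index stuffle}, and their proofs are exactly your Steps~2--4: write $\zeta_p=\zeta_p\circ\pi^{+}$ on $\mathrm{span}_{\QQ}\{\Ical^{\mathrm{adm}}\}$, then compose Theorems~\ref{thm shuffle-pi+} and~\ref{thm stuffle-pi+} with the Besser--Furusho relations for admissible positive indices.

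You go beyond the paper in one place: the claim that the Ebrahimi-Fard--Manchon--Singer shuffle, restricted to positive indices, is the classical shuffle. The paper takes this for granted. Your conclusion is right, but your dictionary is wrong.

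\begin{itemize}
\item In that algebra the letter $y$ acts as multiplication by $z/(1-z)$, not as integration against $dt/(1-t)$. That is why rule (i) of Definition~\ref{defn non-positive shuffle} has only one term. The classical rule would read $yu\shuffle xv=y(u\shuffle xv)+x(yu\shuffle v)$, so (i) is not a classical shuffle rule.
\item The correct identification is $x=j$ and $y_{\mathrm{cl}}=jy$. Then $x^{k_r-1}y_{\mathrm{cl}}\cdots x^{k_1-1}y_{\mathrm{cl}}=w_{\bm{k}}$ for $\bm{k}\in\Ical_{>0}$.
\item Applying (ii) and then (i) gives $jyu\shuffle jv=jy(u\shuffle jv)+j(jyu\shuffle v)$ and $jyu\shuffle jyv=jy(u\shuffle jyv)+jy(jyu\shuffle v)$. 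These are exactly the classical recursions, and the recursion never leaves the span of positive words.
\end{itemize}

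Two further inaccuracies are worth fixing.

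\begin{itemize}
\item Words built from $j$ and $y$ alone are not all positive: they include indices with zero entries, e.g.\ $yy\leftrightarrow(0,0)$.
\item The quotient by $\Tcal+\Lcal$ does more than discard words not ending in $y$. The ideal $\Lcal$ imposes Leibniz relations among words of $Wy$. For example, $dyy\equiv 2\,ydy$, which matches $\Li_{0,-1}=2\Li_{-1,0}$.
\end{itemize}

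So the fact that positive words remain linearly independent in $B$ is not automatic. It is better justified through the realization of $B$ by functions together with the linear independence of the $\Li_{\bm{l}}(z)$, $\bm{l}\in\Ical_{>0}$.
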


\begin{proof}
  By Proposition \ref{prop p-adic MZV integer index shuffle} and Proposition \ref{prop p-adic MZV integer index stuffle}, one can see that $\zeta_p(\bm{k})\zeta_p(\bm{k}') = \zeta_p(\bm{k} \shuffle \bm{k}')$ and $\zeta_p(\bm{k})\zeta_p(\bm{k}') = \zeta_p(\bm{k} \ast \bm{k}')$, which proves the theorem.
\end{proof}

\begin{eg}
  For admissible integer indices $(a)$ and $(b, c)$ with $b < 0$, we have
  \begin{align*}
    & \zeta_p((a) \shuffle (b, c)) \\
    = & \sum_{i = 0}^{a} \binom{c+i}{i} \zeta_p((b)\shuffle (a-i),c+i) + \sum_{i = 0}^{c} \binom{a+i}{i} \zeta_p(b,c-i,a+i) \\
    = & \sum_{i = 0}^{a} \binom{c+i}{i} \left[ \sum_{j = 0}^{\min\{a-i-1, -b\}} (-1)^{j} \binom{-b}{j} \zeta_p(a-i-j,b+j,c+i) \right. \\
    & \left. + (-1)^{a-i} \sum_{j=0}^{-b-a+i} \binom{-b-1-j}{a-i-1} \zeta_p(-j,b+a-i+j,c+i) \right]\\
    & + \sum_{i = 0}^{c} \binom{a+i}{i} \zeta_p(b,c-i,a+i)
  \end{align*}
  and
  \begin{align*}
    & \zeta_p((a) \ast (b, c)) \\
    = & \zeta_p(b, c, a) + \zeta_p((a) \ast (b), c) + \zeta_p(b, a+c) \\
    = & \zeta_p(b, c, a) + \zeta_p(b, a, c) + \zeta_p(a, b, c) + \zeta_p(a+b, c) + \zeta_p(b, a+c),
  \end{align*}
  where we formally let $\binom{n}{-1} = \delta_{n,-1}$.
\end{eg}

\begin{rem}
  The above arguments also work in the complex case: the series \eqref{eq MZV} converges for any admissible integer index $\bm{k} \in \Ical^{\mathrm{adm}}$. Its value $\zeta(\bm{k})$ coincides with $\zeta \circ \pi^{+}(\bm{k})$ and satisfies the double shuffle relation as given in Theorem \ref{thm p-adic MZV integer index DSR}.
\end{rem}

\textbf{Acknowledgments.} The author would like to thank Professor Hidekazu Furusho for his patient supervision, and for suggesting the topic of this work.


\begin{thebibliography}{9}
\bibitem{q-MZVDSR}
H. Bachmann.
\newblock {\em Double shuffle relations for $q$-analogues of multiple zeta values, their derivatives and the connection to multiple Eisenstein series}.
\newblock RIMS Kôkyûroku. no. 2015, pp. 22-43. (2016)

\bibitem{p-adicDS}
A. Besser and H. Furusho.
\newblock {\em The double shuffle relations for $p$-adic multiple zeta values}.
\newblock Contemp. Math. Volume 416, pp. 9-29. (2006)

\bibitem{Euler's formula}
D. M. Bradley.
\newblock {\em A $q$-analog of Euler's decomposition formula for the double zeta function}.
\newblock Int. J. Math. Math. Sci. no. 21, pp. 3453-3458. (2005)

\bibitem{Broadhurst}
D. J. Broadhurst and D. Kreimer.
\newblock {\em Association of multiple zeta values with positive knots via Feynman diagrams up to $9$ loops}.
\newblock Phys. Lett. B. Volume 393, no. 3-4, pp. 403-412. (1997)

\bibitem{non-positive shuffle}
K. Ebrahimi-Fard, D. Manchon and J. Singer.
\newblock {\em The Hopf algebra of ($q$-)multiple polylogarithms with non-positive arguments}.
\newblock Int. Math. Res. Not. IMRN. no. 16, pp. 4882-4922. (2017)

\bibitem{p-adicMZV}
H. Furusho.
\newblock {\em $p$-adic multiple zeta values I: $p$-adic multiple polylogarithms and the $p$-adic KZ equation}.
\newblock Invent. Math. Volume 155, no. 2, pp. 253-286. (2004)

\bibitem{p-adicMZV2}
H. Furusho.
\newblock {\em $p$-adic multiple zeta values II: Tannakian interpretations}.
\newblock  Amer. J. Math. Volume 129, no. 4, pp. 1105-1144. (2007)

\bibitem{p-adicRDS}
H. Furusho and A. Jafari.
\newblock {\em Regularization and generalized double shuffle relations for $p$-adic multiple zeta values}.
\newblock Compos. Math. Volume 143, no. 5, pp. 1089-1107. (2007)

\bibitem{Goncharov}
A. B. Goncharov and Y. I. Manin.
\newblock {\em Multiple $\zeta$-motives and moduli spaces $\overline{\Mcal}_{0,n}$}.
\newblock Compos. Math. Volume 140, no. 1, pp. 1-14. (2004)

\bibitem{EMZVDSR}
P. Lochak, N. Matthes and L. Schneps.
\newblock {\em Elliptic multizetas and the elliptic double shuffle relations}.
\newblock Int. Math. Res. Not. IMRN. no. 1, pp. 698-756. (2021)

\bibitem{Matsumoto}
K. Matsumoto.
\newblock {\em The analytic continuation and the asymptotic behaviour of certain multiple zeta-functions. I}.
\newblock J. Number Theory. Volume 101, no. 2, pp. 223-243. (2003)

\end{thebibliography}
\end{document}